\providecommand{\tightlist}{%
  \setlength{\itemsep}{0pt}\setlength{\parskip}{0pt}}
\newcommand\footnoteref[1]{\protected@xdef\@thefnmark{\ref{#1}}\@footnotemark}
\newtheorem{theorem}{Theorem}[section]
\newtheorem{proposition}[theorem]{Proposition}
\newtheorem{lemma}[theorem]{Lemma}
\newtheorem{corollary}[theorem]{Corollary}
\newtheorem{question}[theorem]{Question}
\theoremstyle{definition}
\newtheorem{definition}[theorem]{Definition}
\newcommand{\K}{\mathbb{K}}
\newcommand{\decompositiontrees}{\mathbb{DT}}
\newcommand{\finitary}[2]{{{#1}^{fin}_{#2}}}
\newcommand{\A}{\mathcal{A}}
\newcommand{\B}{\mathcal{B}}
\newcommand{\D}{\mathcal{D}}
\newcommand{\J}{\mathcal{J}}
\newcommand{\G}{\mathcal{G}}
\renewcommand{\H}{\mathcal{H}}
\newcommand{\E}{\mathbin{E}}
\newcommand{\F}{\mathcal{F}}
\renewcommand{\path}[1]{\mathcal{P}_{#1}}
\newcommand{\cycle}[1]{\mathcal{C}_{#1}}
\newcommand{\free}[1]{\mathsf{Free}(#1)}
\newcommand{\finfree}[1]{\mathsf{Free}(#1)^{fin}}
\renewcommand{\S}{\mathcal{S}}
\newcommand{\T}{\mathcal{T}}
\newcommand{\C}{\mathbb{C}}
\renewcommand{\D}{\mathbb{D}}
\newenvironment{manualtheorem}[1]{%
  \IfBlankTF{#1}
    {}
    {}%
  \manualtheoreminner
}{\endmanualtheoreminner}
\newcommand{\define}[1]{\textbf{#1}}
\newcommand{\Etp}[2]{\exists\text{-tp}_{#1}(#2)}
\renewcommand{\phi}{\varphi}
\DeclareRobustCommand\notE{\mathrel{\m@th\mathpalette\c@ncel E}}
\providecommand{\impmark}[2]{\strut\vadjust{\domark{\textcolor{#1}{#2$\blacktriangleright\blacktriangleright\blacktriangleright$}}}}
\providecommand{\domark}[1]{%
  \vbox to 0pt{
    \kern-\dp\strutbox
    \hbox{\smash{\llap{#1\kern1em}}}
    \vss
  }%
}
    \providecommand{\dinocom}[1]{\impmark{blue}{Dino}\ \textcolor{blue}{#1}}
    \providecommand{\vittocom}[1]{\impmark{red}{Vittorio}\ \textcolor{red}{#1}}
    \providecommand{\lilingcom}[1]{\impmark{magenta}{Liling}\ \textcolor{magenta}{(#1)}}
\crefname{claim}{Claim}{claims}
\Crefname{claim}{Claim}{Claims}
\title{Dichotomy results for classes of countable graphs}
\author[Cipriani]{Vittorio Cipriani}
\author[Fokina]{Ekaterina Fokina}
\author[Harrison-Trainor]{Matthew Harrison-Trainor}
\author[Ko]{Liling Ko}
\author[Rossegger]{Dino Rossegger}
\address[Cipriani, Fokina, Ko, Rossegger]{Institut f\"ur Diskrete Mathematik und Geometrie\\
  Technische Universit\"at Wien\\
  Wiedner Hauptstra{\ss}e 8-10\\
  1040 Wien\\
  AUSTRIA}
  \address[Harrison-Trainor]{Department of Mathematics, Statistics, and Computer Science, University of Illinois, Chicago, IL, USA 60607}
\email{\href{mailto:vittorio.cipriani@tuwien.ac.at}{vittorio.cipriani@tuwien.ac.at}}
\urladdr{\url{https://vittoriocipriani.github.io/}}
\email{\href{mailto:ekaterina.fokina@tuwien.ac.at}{ekaterina.fokina@tuwien.ac.at}}
\email{\href{mailto:mht@uic.edu}{mht@uic.edu}}
\urladdr{\url{https://homepages.math.uic.edu/}}
\email{\href{mailto:li.ko@tuwien.ac.at}{li.ko@tuwien.ac.at}}
\email{\href{mailto:dino.rossegger@tuwien.ac.at}{dino.rossegger@tuwien.ac.at}}
\urladdr{\url{https://drossegger.github.io/}}
\subjclass[2020]{03C57, 03E15, 05C63, 05C60} 
\keywords{Forbidden subgraphs, Bi-interpretability, Borel reducibility, $\Sigma$-small, computable embeddability condition}
\thanks{The first, second and fourth author were supported by the Austrian Science Fund (FWF) 10.55776/P36781. The third author was supported by a Sloan Research Fellowship and by NSF grants DMS-2452105 and DMS-2419591. The last author was supported by the Austrian Science Fund (FWF) 10.55776/PIN1878224.}
\begin{document}

\begin{abstract}We study classes of countable graphs where every member does not contain a given finite graph as an induced subgraph—denoted by $\free\G$ for a given finite graph $\G$. Our main results establish a structural dichotomy for such classes:
    If $\G$ is not an induced subgraph of $\path4$, then $\free\G$ is on top under effective bi-interpretability, implying that the members of $\free\G$ exhibit the full range of structural and computational behaviors.
In contrast, if $\G$ is an induced subgraph of $\path4$, then $\free\G$ is structurally simple, as witnessed by the fact that every member satisfies the computable embeddability condition.

This dichotomy is mirrored in the finite setting when one considers combinatorial and complexity-theoretic properties. Specifically, it is known that $\finfree\G$ is complete for graph isomorphism and not a well-quasi-order under embeddability whenever $\G$ is not an induced subgraph of $\path4$, while in all other cases $\finfree\G$ forms a well-quasi-order and the isomorphism problem for $\finfree\G$ is solvable in polynomial time.

\end{abstract}

\maketitle

\section{Introduction}



Given a collection $\S$ of finite graphs, let
\[\free{\S}=\{\H : \text{no $\G \in \S$ is an induced subgraph of } \H\}\] 
be the set of $\S$-free graphs. Such classes of graphs are well-studied in combinatorics and many natural graph classes arise in this way. For example, the bipartite graphs are the $\S$-free graphs where $\S$ is the collection of odd cycles and the chordal graphs are the $\S$-free graphs where $\S$ is the collection of cycles of length four or more. Other classes of graphs defined in such a way include the perfect graphs, line graphs of graphs, and split graphs. In combinatorics, graphs are usually assumed to be finite. Given a collection $\S$, a general paradigm to solving questions about the $\S$-free graphs is to show that the $\S$-free graphs are structured, often by admitting some kind of decomposition. Given such structure we can for example count the number of $\S$-free graphs on $n$ vertices or develop efficient algorithms for solving problems such as the colourability or isomorphism problems. On the other hand, sometimes we can show that the $\S$-free graphs have no more structure than graphs in general. The bipartite graphs are an example of an unstructured class of graphs, and this can be shown by the fact that any graph $\G$ can be transformed into a bipartite graph $\H$ by bisecting each edge.

In this paper we will restrict to the case of a single forbidden induced subgraph $\G$, and write $\free \G$ for the $\G$-free graphs. For finite graphs, this case is well-understood in combinatorics with a structural dichotomy centered around the graph $\path{4}$, the path consisting of four vertices. $\free{\path{4}}$ enjoys several equivalent characterizations; the one most relevant to our purposes is its equivalence with the class of {complement-reducible graphs}, in short, \emph{cographs}, introduced in~\cite{complement}. Such graphs have natural applications, e.g., in bioinformatics they naturally model certain binary relations between genes~\cite{orthology}. In the finite case, cographs are defined as the smallest class of graphs that contain a single-vertex graph and are closed under disjoint union and complementation. This allows to give a recursive definition, yielding a natural representation of any cograph as a labeled tree known as its \emph{decomposition tree} (see \Cref{sec:prel}).

As shown in~\cite{complement}, this tree-based decomposition makes it possible to solve, in polynomial time, a number of problems for finite graphs that are hard or even $\mathsf{NP}$-complete in general. These include, for example, problems such as graph isomorphism, Hamiltonicity, clique detection, and graph coloring. Structural dichotomies for classes of graphs of the form $\free \G$ tend to center around $\free{\path{4}}$. For example, if $\G$ is an induced subgraph of $\path{4}$ (so that $\free \G \subseteq \free {\path{4}}$) then graph isomorphism for $\free \G$ is solvable in polynomial time, and otherwise if $\G$ is not an induced subgraph of $\path{4}$ (so that $\free \G \nsubseteq \free {\path{4}}$) then graph isomorphism for $\free{\G}$ is as hard as graph isomorphism in general\footnote{Note that the graph isomorphism problem is neither known to be $\mathsf{NP}$-complete nor known to lie in $\mathsf{P}$. Babai~\cite{babai2016graph} has shown that graph isomorphism is solvable in quasipolynomial time (see also the expository article~\cite{Helfgott} and its translation~\cite{Helfgott-translation} which contains corrections by Babai to the original proof). Consequently, $GI$ is widely believed to be an intermediate class between the two. In this equivalence, and in all subsequent ones concerning $GI$-completeness, we assume that $GI$ is not in $P$.} ($GI$-complete, see~\cite{booth1979problems}). Similarly, $\free{\G}$ is a {well-quasi-order} under the induced subgraph relation if and only if $\G$ is an induced subgraph of $\path{4}$~\cite{damaschke}. Thus, the cographs are the dividing line between structure and non-structure:  $\free \G$ is tractable if and only if $\free \G$ consists entirely of cographs.

In this paper we study classes of countably infinite graphs. Our perspective is that of computable structure theory and descriptive set theory, branches of mathematical logic which provide a framework---based on algorithms, respectively, topology---for analyzing the complexity of countably infinite structures. When considering classes of structures, we typically observe two extremes. On one side are classes subject to strong global structural constraints, which tightly restrict the behavior of their structures and are thus structurally simple. On the other side lie classes that are on top with respect to some notion of reducibility. These classes accommodate the full spectrum of behaviors and serve as universal benchmarks in complexity comparisons. These classes are thus structurally complex.

As in the finite case, in the infinite case we establish $\free{\path{4}}$ as the main character of structural dichotomies for classes of graphs of the form $\free \G$. We show the following results:
\begin{itemize}
    \item When $\G$ is an induced subgraph of $\path{4}$, $\free{\G}$ is structurally simple:
    \begin{itemize}
        \item $\free{\G}$ is $\Sigma$-small, which means that there are at most countably many existential types.
        \item $\free{\G}$ satisfies the computable embeddability condition, which means that each existential type is computable.
        \item $\free{\G}$ is not on top for bi-interpretability, which means that there is a graph which is not bi-interpretable with a $\G$-free graph. (In particular, there is an automorphism group of a graph which is not the automorphism group of a $\G$-free graph.)
    \end{itemize}
    \item When $\G$ is not an induced subgraph of $\path{4}$, $\free{\G}$ is structurally complex: 
    \begin{itemize}
        \item $\free{\G}$ is not $\Sigma$-small.
        \item $\free{\G}$ does not satisfy the computable embeddability condition.
        \item $\free{\G}$ is on top for bi-interpretability: Every graph is (computably, uniformly, $\Sigma_1$)-bi-interpretable with a $\G$-free graph.
    \end{itemize}
\end{itemize}
This lines up with the dichotomies for finite graphs, summarized by the following table.

\begin{table}[H]
\centering
\small
\begin{tabular}{|c|c|c|c|}
\hline
 & Complexity theory&  Combinatorics &Computable structure theory\\ \hline
$\free{\G} \not\subseteq \free{\path{4}}$
   &  $\finfree{\G}$ \text{is GI-complete}&$\finfree{\G}$ is not a wqo&$\free{\G}$ is on top for eff.\ bi-int.\ \\ \hline
  $\free{\G} \subseteq \free{\path{4}}$& GI on $\finfree{\G}$ is in $\mathsf{P}$& $\finfree{\G}$ is a wqo&$\free{\G}$ is $\Sigma$-small  \\
&& &every $\H$  in $\free{\G}$ has the c.e.c.\\\
&& &$\free{\G}$ \text{is not on top for inf.\ bi-int.}\\ \hline

\end{tabular}
\caption{\label{table:dichotomies}Summary of the dichotomies for classes of graphs obtained by forbidding a finite induced subgraph. The first row indicates the areas of study from which the complexity notions under consideration originate. The last two rows represent mutually exclusive alternatives: given a finite graph $\G$, either $\free{\G}$ satisfies all conditions in the second row, or all conditions in the third row.
}
\end{table}

We do find a new phenomenon for infinite graphs which does not appear for finite graphs. A corresponding notion to the graph-isomorphism problem, and GI-completeness, for infinite structures comes from Borel reducibility theory. We say that a class of structures $\C$ is on top for Borel reducibility if there is a Borel map $\Phi$ such that for all graphs $\G,\H$,
\[ \G \cong \H \Longleftrightarrow \Phi(\G) \cong \Phi(\H).\]
The reader unfamiliar with descriptive set theory should think of a Borel map as a ``reasonably definable'' map. If $\C$ is on top for Borel reducibility, this means that the isomorphism problem for $\C$ is at least as hard as the isomorphism problem for graphs (which is as hard as an isomorphism problem can be). If a class of structures is on top for bi-interpretability, then it is on top for Borel reducibility, but not vice versa. Well-known examples of classes which are on top for Borel reducibility, but not for bi-interpretability, are trees and linear orders. We show that cographs can be added to this list.

\begin{manualtheorem}{\ref{theorem:p4borelnotbiinterpret}}
    $\free{\path{4}}$ is on top for Borel reducibility but not for effective bi-interpretability.
\end{manualtheorem}

\noindent In particular, out of all classes $\free \G$, only $\free {\path{4}}$ is on top for Borel reducibility but not on top for effective bi-interpretability.

This phenomenon shows up in the infinite case because deciding whether two infinite trees are isomorphic is very hard (trees are Borel complete) while deciding whether two finite trees are isomorphic is very easy (in linear time by the AHU algorithm). Given a finite cograph $\G$, the size of its tree $\T$ is polynomial in the number of vertices, and so we get this difference of behaviour. (We note that Friedman and Stanley's argument in~\cite{FS} that trees are Borel complete is by associating to some graph $\G$ the tree of all finite tuples from that graph. While for a countable graph this yields a countable tree, for a finite graph the size of the tree is exponential in the size of the graph, and so even though determining isomorphism of the trees is linear in the size of the tree it is exponential in the size of the graphs.)

For finite graphs, as far as the authors are aware, whenever the isomorphism problem for a class of graphs is shown to be maximally complicated it is always because there is an interpretation of arbitrary graphs into that class. Is this necessarily the case, or is there, as for infinite graphs, some other strategy for showing that isomorphism on a class is maximally complicated?

\begin{question}
    Is there a collection $\S$ of finite graphs such that $\free \S$ is GI-complete, but there is no interpretation from the class of all graphs to $\free \S$?
\end{question}

\noindent More generally, we could ask a similar question for more general classes, e.g., for any hereditarily finite class in a relational language.

The paper is organized as follows. In \Cref{sec:prel} we give all the necessary preliminaries and notations regarding (classes of) structures, in particular for labeled trees and graphs. \Cref{sec:cecsigmasmall}, after giving the necessary preliminaries around the notion of $\Sigma$-smallness, is devoted to prove \Cref{theorem:graphsembeddable}. In addition we will prove that $\free{\path{4}}$ is not $\Sigma_2^{\mathsf{inf}}$-small, i.e., it has uncountably many $\Sigma_2^{\mathsf{inf}}$-types (\Cref{prop:notsigmasmall}). We mention that in natural classes of structures, the property of being $\Sigma$-small but not $\Sigma_2^{\mathsf{inf}}$-small has been observed so far only for equivalence structures.  \Cref{sec:results:ontop} is devoted to prove \Cref{theorem:universalcondition} and \Cref{theorem:characterizationborelcomplete}. 

\section{Notation and background}
\label{sec:prel}
Structures will be denoted by calligraphic letters such as $\A, \B, \mathcal{C}, \dots$, with their corresponding domains being $A, B, C, \dots$. Embeddings between structures will be denoted via \lq\lq $\hookrightarrow$\rq\rq. 
 All of our structures have domain $\omega$ and, given a signature $\tau$, we denote by $Mod(\tau)$ the space of 
$\tau$-structures with domain $\omega$. Notice that such a space is  \define{Polish}, i.e., a separable completely metrizable topological space. The basic clopen sets are determined by atomic and negated atomic formulas.

The structures we will be interested in are {labeled trees} and {graphs}. Graphs $\G$ will be irreflexive and undirected. Our notion of subgraph will always be an induced subgraph. We write $\path{n}$ for the \emph{path graph} with $n$ vertices (i.e., a graph forming a linear chain of size $n$), and $\cycle{n}$ for the \emph{cycle graph} with $n$ vertices. We denote by $\overline{\G}$ the complement of $\G$, i.e., the graph having the same set of vertices and such that two vertices have an edge in $\overline{\G}$ if and only if they do not have an edge in $\G$. Given two graphs $\G$ and $\H$ the \define{disconnected union} of $\G$ and $\H$ is defined as the graph consisting of $\G$ and $\H$ with no edge connecting a vertex of $G$ and a vertex of $H$. The \define{connected union} of $\G$ and $\H$ is defined as the graph consisting of $\G$ and $\H$ and in which every vertex of $G$ is connected to every vertex of $H$. Note that the disconnected union of $\G$ and $\H$ is the complement of the connected union of $\overline{\G}$ and $\overline{\H}$.

As mentioned in the introduction, the $\path{4}$-free graphs are crucial to this paper. The finite $\path{4}$-free graphs are exactly the \emph{complement reducible graphs (cographs)}. These are the smallest class of graphs which may be constructed by starting with the single-vertex graphs and closing under connected unions and disconnected unions. For countably infinite $\path{4}$-free graphs we must also use an operation called by~\cite{siblings} a \define{sum over a labeled chain} which is defined as follows.
Let $(\G_i)_{i \in I}$ for some countable set $I$ be a sequence of graphs. Let $\leq$ be a linear order defined on $I$ and let $\ell$ be a labelling function from $I$ to $\{0,1\}$. The labeled sum of $(\G_i)_{i \in I}$ is the graph
obtained first performing the disconnected union of $(\G_i)_{i \in I}$ and then, adding an edge between $v \in \G_i$ and $w \in \G_j$ for $i < j$ if and only if $r(i)=1$. Following standard convention, we continue to call the infinite $\path{4}$-free graphs cographs.

Since cographs are constructed recursively from smaller building blocks, we can understand them by a corresponding \emph{decomposition tree} which records the construction. A \define{tree} $\T = (T,\preceq)$ is a partially ordered set such that for every $v \in T$, 
$pred(v)=\{x : x \preceq_\T v\}$ is linearly ordered and any two elements have a lower bound. We will call elements of the tree \define{nodes} and a maximal element of the tree a \define{leaf}. A tree is a \define{simple tree} if it has a least element (the \define{root}) and if every element has an immediate predecessor, in which case we can think of the tree as an acyclic graph with a distinguished element. Every finite tree is a simple tree, and so for finite cographs only simple trees will be required. For infinite cographs we will make use of arbitrary trees.


Given a set of labels $L$, a \emph{labeled tree} is a tree with a labelling $\ell : T \rightarrow L$ where $L\subseteq \omega$. Given two labeled trees $\T_1$, $\T_2$ with functions $\ell_1$ and $\ell_2$ an embedding from $\T_1$ into $\T_2$ is given by an injective function $f$ from $T_1$ to $T_2$ such that:
\begin{itemize}
    \item for every $v \in T_1$, $\ell_1(v) \leq \ell_2(f(v))$;
    \item if $v\preceq_{\T_1} w$ then $f(v)\preceq_{\T_2} f(w)$;
    \item if $v \preceq_{\T_1} w_1,w_2$ with $w_1\neq w_2$ then $f(v) \preceq_{\T_2} f(w_1),f(w_2)$ with $f(w_1) \neq f(w_2)$.
\end{itemize}
In this paper, we will always assume that the order on the labels induces an antichain and hence the first condition becomes ``for every $v \in T_1$, $\ell_1(v) = \ell_2(f(v))$''.

  

We begin by describing the decomposition trees of finite cographs since this is the simpler case. {Finite cographs} are those graphs that can be obtained from the one-vertex graph iterating the operations of connected and disconnected union. This leads to the representation of a cograph $\G$ as a finite $\{0,1,2\}$-labeled tree $\T_\G$ describing how $\G$ was constructed. All leaves of $\T_\G$ will be labeled by $2$ and correspond to the vertices of $\G$. The nodes of $\T_\G$ that are not leaves are labeled by $0$ or $1$, denoting respectively the operations of disconnected and connected union (of the cographs represented by the subtrees of that node). An immediate consequence of this is that two vertices are connected in $\G$ if and only if their least upper bound, or \emph{meet}, in $\T_\G$ is labeled by $1$. To guarantee that every cograph $\G$ is  determined by a unique decomposition tree $\T_\G$ we require that if a vertex $t \in T_\G$ is labeled by 1 (0) then all immediate successors of $t$ not being leaves are labeled 0 (1) (see~\cite[Section 3]{damaschke}). Any cograph has a tree of this form, and given any tree of this form it is the decomposition tree of a graph. It is worth noticing, as it will come in handy later, that every node in a finite decomposition tree that is not a leaf has at least two immediate successors. 

For infinite cographs we must account for the operation of sums of labeled chains described previously. We follow~\cite{siblings} which in turn follows~\cite{courcelle2008modular}. We make the following definitions some of which trivialize for simple trees. We say that a tree $\T$ is a \define{meet-tree} if every pair of elements $x,y \in \T$ have a \define{meet} $x \wedge y$ which is the closest common ancestor of $x$ and $y$. Note that the meet of any finite number of elements in a meet-tree is actually the meet of exactly two elements. A meet-tree $\T$ is \define{ramified} if every element of $\T$ is the meet of a
finite set of maximal elements of $\T$. Given a ramified meet-tree $\T$, a labelling of $\T$ is a map $\ell \colon \T \to \{0,1,2\}$ such that the nodes of $\T$ labeled $2$ are exactly the leaves. The labelling is \define{dense} if for any $x \succ y$ non-leaves there is some $z$ with $x \succ z \succeq y$ and $\ell(z) \neq \ell(x)$. Then the decomposition trees are exactly the densely labeled ramified meet-trees. In particular:
\begin{itemize}
    \item Given a densely labeled ramified meet-tree $\T$, let $\G(\T)$ be the graph whose vertices are the leaves of $\T$, with two vertices $u$ and $v$ being joined by an edge if and only if $u \wedge v$ is labeled $1$.
    \item Given a cograph $\G$, the \define{decomposition tree} $\T(\G)$ of $\G$ is the tree whose nodes are $R(\G)$, the \define{robust modules} of $\G$.\footnote{We refer the reader to~\cite{siblings} for more details here, but recall briefly the definitions. A \define{module} of a graph $\G$ is any subset $A \subseteq G$ such that for all $u,v \in A$ and $w \notin A$, there is an edge between $u$ and $w$ if and only if there is one between $v$ and $w$. A module is \define{strong} it is either comparable to or disjoint from every module. A module is \define{robust} if it is the least strong module containing two vertices of $\G$. For this paper it mostly suffices to note that robust modules are graph-theoretically defined and the property of being a robust module is maintained by isomorphisms.} The trivial modules---which consist of a single vertex---get the label $2$ and are the leaves of $\T$. If $A \subseteq G$ is the least robust module of $\G$ containing $u$ and $v$, then we give $A$ the label $1$ if there is an edge between $u$ and $v$, and $0$ if there is no such edge; this does not depend on the choice of $u$ and $v$.
\end{itemize}
Theorem 6.19 of~\cite{siblings} proves that these constructions give a one-to-one correspondence between cographs and ramified meet-trees densely labeled by $\{0,1,2\}$. In Theorem \ref{theorem:cotreescographsBBF} we show that they are in fact bi-interpretable in infinitary logic. This is somewhat implicit in~\cite{courcelle2008modular} though Courcelle and Delhomm\'e use second-order logic. In particular, an automorphism of the decomposition tree decomposition induces an automorphism of the cograph by restricting to the leaves, and an automorphism of the cograph induces an automorphism of its tree by acting on the leaves and extending to the whole tree in a unique way.

\section{$\free{\G}$ that are structurally simple}
\label{sec:cecsigmasmall}
Recall that the \define{existential type} of a tuple $\bar a$ in a structure $\A$ is the set
 \[\Etp{\A}{\bar a}=\{\phi : \phi(\overline{x})\text{ is a first-order existential formula with }\A \models \varphi(\overline{a})\}.\]
A first-order type $p$ is realized in a structure $\A$ if there is a tuple $\bar a$ such that for all $\phi\in p$, $\A\models \phi(\bar a)$, and we say that $p$ is \define{realized} in a class of structures $\K$ if there is a structure in $\K$ realizing $p$. 
Furthermore, an existential type $p$ is \define{sharply realized} in $\K$ if
there exists a tuple $\bar{a}$ in some structure $\A \in \K$ so that $p=\Etp{\A}{\bar a}$, and it is \define{sharply realized} in $\A$ if it is sharply realized in $\K=\{\A\}$.

 The following definition first appeared under the name recursive embeddability condition in~\cite{richter}.
\begin{definition}
\label{definition:computablembeddabilitycondition}
    A structure $\A$ satisfies the \define{computable embeddability condition (c.e.c)} if each sharply realized existential type realized in $\A$ is computable. A class $\K$ satisfies the computable embeddability condition if every structure $\K$ satisfies it.
\end{definition}

The following classes have the computable embeddability condition: linear orderings, trees (\cite{richter}), Boolean algebras (\cite{harris2012}), $\mathbb{Q}$-vector spaces, algebraically closed fields, differentially closed fields, abelian p-groups, and equivalence structures.

Montalbán~\cite{counting} defined the following non-effective version of the c.e.c. Though his definition was in terms of universal types, we state it here in terms of existential types. The two definitions are equivalent.  
\begin{definition}
\label{definition:sigmasmall}
    A class $\K$ of structures is \define{$\Sigma$-small} if there are at most countably many existential types sharply realized in $\K$.
\end{definition}

\noindent It is an immediate observation that a class of structures is $\Sigma$-small if and only if there is an $X\in 2^\omega$ so that all structures in $\K$ have the computable embeddability condition relative to $X$.

Any three element connected induced subgraph of a graph in $\free{\path{3}}$ must be complete and thus the edge relation on such a graph must be transitive. From this it follows that graphs in $\free{\path{3}}$ are just disconnected unions of cliques.
It also shows that graphs in $\free{\path{3}}$ are definitionally equivalent with equivalence structures (i.e., we can define an equivalence relation agreeing with the cliques in the graphs using quantifier-free formulas, and vice versa, define graphs from equivalence relations). 
Using this equivalence we get the following from results about equivalence structures in the literature~\cite[Section X]{within}.
\begin{proposition}
\label{proposition:P3free}
Every graph in $\free{\path{3}}$  (i.e., equivalence structure) has the computable embeddability condition. Thus, $\free{\path{3}}$ is $\Sigma$-small.
\end{proposition}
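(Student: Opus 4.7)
The plan is to transfer the known computable embeddability condition for equivalence structures (cited from \cite[Section X]{within}) to $\free{\path{3}}$ via the definitional equivalence already noted in the preceding paragraph, and then to derive $\Sigma$-smallness as an immediate corollary.

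First, I would spell out the definitional equivalence explicitly. Given $\H \in \free{\path{3}}$, define an equivalence structure $\mathcal{E}_\H$ on the same domain by $u \E v \iff u = v$ or $uv$ is an edge of $\H$; since $\path{3}$-free graphs are disjoint unions of cliques, $\E$ is indeed an equivalence relation. Conversely, from an equivalence relation $\E$ one recovers a graph by declaring $u,v$ adjacent iff $u \neq v$ and $u \E v$. Both translations use only quantifier-free formulas, so any existential formula in the graph signature is equivalent, after substitution, to an existential formula in the equivalence-structure signature, and vice versa. This rewriting is computable, so for any tuple $\bar a$ the existential types $\Etp{\H}{\bar a}$ and $\Etp{\mathcal{E}_\H}{\bar a}$ are many-one equivalent; in particular, one is computable iff the other is.

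Second, applying the cited fact that every equivalence structure has the c.e.c., the translation above yields the c.e.c.\ for every $\H \in \free{\path{3}}$. Finally, $\Sigma$-smallness follows from the observation recorded immediately after \Cref{definition:sigmasmall}: $\free{\path{3}}$ is $\Sigma$-small iff there is a common oracle $X$ relative to which every structure in the class has the c.e.c., and here $X = \emptyset$ works. (Equivalently, there are only countably many computable sets, hence only countably many computable existential types, and we have just shown every sharply realized existential type in $\free{\path{3}}$ is computable.) I expect no real obstacle: the entire argument is routine bookkeeping around a cited result, and the only content-bearing step is the quantifier-free bidefinability, which is immediate from the structural observation about disjoint unions of cliques.
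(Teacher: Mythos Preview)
Your proposal is correct and matches the paper's approach: the paper does not give an explicit proof but simply appeals to the definitional equivalence with equivalence structures and cites \cite[Section X]{within} for the c.e.c., which is exactly what you spell out in detail. Your argument is just a more explicit rendering of the one-line justification the paper gives.
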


We are going to prove the same result for $\free{\path{4}}$. To do so, we use a technique that appeared in~\cite[Chapter X.2]{within} and is based on~\cite{richter}. 
%
Let $\tau$ be a finite vocabulary without function symbols, and let $\K$ be a
class of $\tau$-structures.  Given a set $A$, define $\tau_A$ by increasing
the vocabulary $\tau$ with new constant symbols, one for each element of $A$. Then, a $\tau_A$-structure is determined by a $\tau$-structure $\B$ and a map $f : \A \rightarrow \B$ describing the assignments of the new constants. We denote by 
$\finitary{\K}{}$ the set of finite substructures in $\K$ and, given some
$\A \in \finitary{\K}{}$, let \[\K_\A=\{\B\in\K: \B \text{ is a } \tau_A\text{-structure and } \A \overset{g}{\hookrightarrow} \B \text{ for some $\tau_A$-embedding $g$}\}.\]

Recall that a \define{well quasi-order} (in short, \define{wqo}) is a quasi-order without infinite antichains and infinite descending chains. We use the following sufficient condition for a class to be $\Sigma$-small.

\begin{theorem}[{\cite[Theorem 10.2.2]{within}}]
\label{theorem:wqocondition}
   Let $\K$ be a class of structures in finite vocabulary without function symbols. Suppose that for every finite substructure $\A \in \finitary{\K}{}$, $\K_\A^{fin}$ is a wqo under the embeddability relation. Then, in $\K$, every $\Sigma_1^{\mathsf{inf}}$ formula is equivalent to a finitary existential formula. In particular, $\K$ is $\Sigma$-small.
\end{theorem}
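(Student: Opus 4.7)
The plan is to prove the stronger statement that every $\Sigma_1^{\mathsf{inf}}$ formula is equivalent over $\K$ to a finitary existential formula, and separately to deduce $\Sigma$-smallness from the same wqo machinery. Write a $\Sigma_1^{\mathsf{inf}}$ formula in the normal form $\phi(\bar y) = \bigvee_{i \in \omega} \exists \bar x_i\, \psi_i(\bar x_i, \bar y)$ with each $\psi_i$ finitary quantifier-free. Since $\tau$ is finite relational and $|\bar y|$ is fixed, only finitely many quantifier-free types $q(\bar y)$ are consistent with $\K$, and each such $q$ is itself a finitary quantifier-free formula. It therefore suffices to show, for each $q$, that $\phi$ is $\K$-equivalent to a finitary existential formula on tuples of quantifier-free type $q$; the full equivalence will then read $\phi(\bar y) \equiv \bigvee_q (q(\bar y) \wedge \phi_q(\bar y))$, a finite disjunction of finitary existentials.

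Fix such a $q$ and let $(\A_q, \bar a_q)$ denote the $\tau_{A_q}$-structure generated by any realization $\bar a_q$ of $q$. Define $\mathcal{U}_q \subseteq \finitary{\K}{A_q}$ to be the collection of $(\A', \bar a_q) \supseteq (\A_q, \bar a_q)$ in which some disjunct $\psi_i$ of $\phi(\bar a_q)$ is witnessed by a tuple from $A'$. Then $\mathcal{U}_q$ is upward-closed under $\tau_{A_q}$-embeddings, since any such embedding carries a witness to a witness. By hypothesis $\finitary{\K}{A_q}$ is a wqo, so $\mathcal{U}_q$ has only finitely many minimal elements $(\A^{(1)}, \bar a_q), \dots, (\A^{(k)}, \bar a_q)$. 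For each $j$, let $\theta_j(\bar y)$ be the finitary existential formula asserting the existence of elements that, together with $\bar y$, form a $\tau_{A_q}$-copy of $\A^{(j)}$. For any $(\B, \bar a) \in \K$ with $\bar a$ of quantifier-free type $q$, we then have $\B \models \phi(\bar a)$ iff $\B \models \theta_j(\bar a)$ for some $j$: the backward direction is immediate because $\A^{(j)} \in \mathcal{U}_q$ already witnesses $\phi$, and for the forward direction, a witness in $\B$ generates a finite $\tau_{A_q}$-substructure lying in $\mathcal{U}_q$, into which some minimal $\A^{(j)}$ must $\tau_{A_q}$-embed.

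To deduce $\Sigma$-smallness, observe that for a tuple $\bar a$ of quantifier-free type $q$ in $\A \in \K$, its existential type $\Etp{\A}{\bar a}$ is determined by the downward-closed set $D(\A, \bar a) \subseteq \finitary{\K}{A_q}$ of finite $\tau_{A_q}$-structures that $\tau_{A_q}$-embed into $(\A, \bar a)$: each finitary existential formula on $\bar a$ expresses exactly such an embeddability condition. Downward-closed subsets of a wqo are determined by the (necessarily finite) antichain of minimal excluded elements; since $\finitary{\K}{A_q}$ is countable, the set of such antichains is countable, so there are countably many possible $D(\A, \bar a)$. Summing over the finitely many $q$ yields countably many sharply realized existential types.

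The main obstacle will be verifying that the minimal witnesses $\A^{(j)}$ and formulas $\theta_j$ depend only on the isomorphism type $q$, not on the particular realization $\bar a_q$. This follows because $\finitary{\K}{A_q}$ and the property of containing a $\psi_i$-witness depend only on the isomorphism type of $(\A_q, \bar a_q)$, and $\tau_{A_q}$-embeddability is preserved under $\tau_{A_q}$-isomorphism. Once this bookkeeping is handled, the proof reduces to two ingredients: the finite-basis property for upward-closed sets in a wqo, and the countability of finite antichains in a countable wqo.
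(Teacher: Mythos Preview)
The paper does not prove this theorem; it is quoted from \cite{within} and used as a black box. Your argument is the standard one and is correct: reduce to a fixed quantifier-free type $q$ on the parameter tuple, observe that the finite $\tau_{A_q}$-extensions in which some disjunct of $\phi$ is witnessed form an upward-closed subset of the wqo $\finitary{\K}{A_q}$, take its finite basis, and read off a finitary existential formula. Your separate derivation of $\Sigma$-smallness via counting downward-closed sets (equivalently, finite antichains of minimal excluded elements) in a countable wqo is likewise the intended mechanism.

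One point worth making explicit: when you pass from a witness $\bar b$ for $\psi_i(\bar x_i,\bar a)$ inside an arbitrary $\B\in\K$ to the finite substructure on $\bar a\bar b$, you are tacitly using that this substructure lies in $\finitary{\K}{A_q}$, i.e.\ that $\K$ is closed under (finite) substructures. This is automatic for the hereditary classes $\free{\G}$ treated in the paper and is the intended setting of the cited theorem, but it is not stated in the hypothesis as written; you should flag it.
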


To prove that such a class is $\Sigma$-small we want to apply \Cref{theorem:wqocondition}. To do so, first notice that, since decomposition trees are just labeled trees, that they form a well quasi-order was already established by Kruskal~\cite{kruskal}.
\begin{theorem}[Kruskal's tree theorem]
\label{thm:krustal}
Let $L$ be a well quasi-ordered set of labels. The collection of finite $L$-labeled rooted trees ordered by embeddability is a wqo.
\end{theorem}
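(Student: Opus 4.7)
The plan is to give the classical Nash-Williams minimal bad sequence argument. Suppose for contradiction that finite $L$-labeled rooted trees under embeddability admit a \emph{bad sequence}, that is, a sequence $(T_n)_{n\in\omega}$ such that $T_i$ does not embed into $T_j$ for any $i<j$. I would then fix, by recursion on $n$, a \emph{minimal} bad sequence: at each stage choose $T_n$ to have the smallest possible number of vertices among all trees $T$ for which some bad sequence begins with $T_0,\dots,T_{n-1},T$.

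The core step is to show that the set $S$ of all proper subtrees of the $T_n$ (those rooted at non-root vertices) is itself wqo under embeddability. Suppose otherwise, and let $(S_k)_{k\in\omega}$ be a bad sequence in $S$. For each $k$ let $n(k)$ denote the least $n$ such that $S_k$ is a proper subtree of $T_n$, and choose $k^\ast$ globally minimizing $n(k)$. Then
\[T_0,\dots,T_{n(k^\ast)-1},\,S_{k^\ast},\,S_{k^\ast+1},\dots\]
is still a bad sequence: a putative embedding $T_i\hookrightarrow S_m$ with $i<n(k^\ast)$ and $m\ge k^\ast$ would compose with the inclusion $S_m\hookrightarrow T_{n(m)}$ to give $T_i\hookrightarrow T_{n(m)}$ with $n(m)\ge n(k^\ast)>i$, contradicting badness of $(T_n)$. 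Since $|S_{k^\ast}|<|T_{n(k^\ast)}|$, this contradicts minimality of the original sequence at position $n(k^\ast)$.

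Once $S$ is known to be wqo, I would invoke Higman's lemma to conclude that the set $S^{<\omega}$ of finite sequences from $S$, under componentwise subsequence embedding, is also wqo. For each $n$ let $r_n\in L$ be the label of the root of $T_n$, and let $C_n\in S^{<\omega}$ be the list of immediate subtrees at the children of that root. Since both $L$ and $S^{<\omega}$ are wqo, so is the product $L\times S^{<\omega}$, hence there exist indices $i<j$ with $r_i\le r_j$ in $L$ and $C_i$ embedding into $C_j$. Mapping the root of $T_i$ to the root of $T_j$ and extending by the supplied embeddings on the immediate subtrees produces an embedding $T_i\hookrightarrow T_j$, contradicting badness of $(T_n)$.

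The hard part is the splicing step: verifying that the sequence obtained by prepending $T_0,\dots,T_{n(k^\ast)-1}$ to the tail of a bad sequence in $S$ remains bad. This rests on the global minimization of $n(k^\ast)$, which guarantees $n(m)\ge n(k^\ast)$ for every $m\ge k^\ast$, combined with the fact that embeddings into proper subtrees lift to embeddings into the ambient trees. A subsidiary point is that Higman's lemma is invoked as a black box, though it admits a proof by exactly the same minimal bad sequence technique applied to sequences rather than trees.
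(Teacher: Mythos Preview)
The paper does not prove this theorem at all: it is stated with a citation to Kruskal and then used as a black box. So there is no proof in the paper to compare against.

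Your argument is the standard Nash-Williams minimal bad sequence proof and is correct as written. The splicing step is handled properly: by taking $k^\ast$ to minimize $n(k)$ globally you ensure $n(m)\ge n(k^\ast)>i$ for every $m\ge k^\ast$, so any putative $T_i\hookrightarrow S_m$ lifts to $T_i\hookrightarrow T_{n(m)}$ and contradicts badness of the original sequence; the remaining pairs in the spliced sequence are covered by badness of $(T_n)$ and of $(S_k)$ respectively. The final step, combining $r_i\le r_j$ with the Higman subsequence embedding of $C_i$ into $C_j$, does yield a root-to-root embedding $T_i\hookrightarrow T_j$ because distinct immediate subtrees of $T_i$ land in distinct immediate subtrees of $T_j$. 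Nothing is missing.
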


With some further effort, the result above allows us to prove \Cref{theorem:wqocondition} for the class of decomposition trees, which we denote by $\decompositiontrees$ for the remainder of this section.

\begin{lemma}
\label{lemma:cotreewqo}
      For any finite decomposition tree $\T$, $\finitary{\decompositiontrees}{\T}$ is a wqo. Hence, the class of decomposition trees is $\Sigma$-small.
\end{lemma}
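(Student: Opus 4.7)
The plan is to deduce the wqo conclusion from Kruskal's tree theorem (\Cref{thm:krustal}) by encoding the $\tau_T$-structure into the labels, and then to invoke \Cref{theorem:wqocondition} to obtain $\Sigma$-smallness. This mirrors the blueprint used for several classes in \cite[Chapter X.2]{within}.

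First I would enrich the label set. Fix a finite decomposition tree $\T$ with domain $T$, and set $L = \{0,1,2\} \times (T \cup \{*\})$, which is finite and hence a wqo under the antichain order. Given any $\T' \in \finitary{\decompositiontrees}{\T}$ with witnessing $\tau_T$-embedding $g:\T \hookrightarrow \T'$, relabel each node $v$ of $\T'$ by the pair $(\ell_{\T'}(v),\, g^{-1}(v))$ when $v$ lies in the image of $g$, and by $(\ell_{\T'}(v),\, *)$ otherwise. By Kruskal's theorem applied with label set $L$, the resulting class of finite $L$-labeled rooted trees is a wqo under label-respecting embeddings, so the image of $\finitary{\decompositiontrees}{\T}$ under this encoding, being a subclass of a wqo, is itself a wqo.

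Next I would verify that a label-respecting tree embedding on the enriched side is precisely a $\tau_T$-embedding of the underlying decomposition trees: preservation of the first coordinate recovers $\{0,1,2\}$-label preservation (so leaves map to leaves), while preservation of the second coordinate identifies the distinguished copies of $\T$ element by element, which is exactly what a $\tau_T$-embedding demands. A minor subtlety is that Kruskal's theorem is typically stated with meet-preserving (homeomorphic) embeddings, whereas the embeddings of \Cref{sec:prel} only require injective order-preservation; this is harmless, because a wqo under the finer (Kruskal) relation immediately yields a wqo under the coarser relation used here. Finally, applying \Cref{theorem:wqocondition} to $\K = \decompositiontrees$ gives that the class of decomposition trees is $\Sigma$-small. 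The main insight, rather than an obstacle, is the label-enrichment trick, which trades the extra constants of $\tau_T$ for extra tree labels and reduces the problem to the pure Kruskal setting.
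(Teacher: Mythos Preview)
Your proposal is correct and follows essentially the same approach as the paper: encode the constants into an enlarged finite label set, apply Kruskal's theorem, and then invoke \Cref{theorem:wqocondition}. The only cosmetic difference is that the paper uses the label set $\{0,1,2\}\cup\{a+2:a\in A\}$ in place of your product $\{0,1,2\}\times(T\cup\{*\})$, and your remark about meet-preserving versus order-preserving embeddings is a subtlety the paper leaves implicit.
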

\begin{proof}
Given a finite $\free{\path{4}}$ graph $\G$ with set of constants $A$, consider its decomposition tree $\T$ with set of constants $A$ and labelling function $\ell$.

Given $\S \in \finitary{\decompositiontrees}{\T}$, consider the tree $\S^A=(S^A,\ell^A)$ defined as follows.
The tree $\S^A$ is $\S$ but without constants and with a different labelling function. Namely, we define $range(\ell^A)=\{0,1,2\} \cup \{a+2 : a \in A\}$ and we let $\ell^A(v)=a+2$ if $a^\T=v$ and $\ell^A(v)=\ell(v)$ otherwise. Notice that $\S^A$ is just a finite labeled tree, hence, by \Cref{thm:krustal}, the set $\{\S^A : \S \in \finitary{\decompositiontrees}{\T}\}$ ordered by embeddability is a wqo. 

By definition of embedding between labeled trees and the definition of $\S^A$ (and in particular of $\ell^A$) we have that for $\S_0,\S_1 \in \finitary{\decompositiontrees}{\T}$, $\S_0$ embeds into $\S_1$ if and only if $\S_0^A$ embeds into $\S_1^A$. Hence, also $\finitary{\decompositiontrees}{\T}$ ordered by embeddability is a wqo.
\end{proof}

To apply \Cref{theorem:wqocondition} and get the result for $\free{\path{4}}$, we need to use the announced interplay between decomposition trees and $\free{\path{4}}$.

\begin{lemma}
    \label{lemma:damaschke}
    If $\G$ and $\G'$ are finite cographs with decomposition trees $\T$ and $T'$ respectively, then $\G$ is an induced subgraph of $\G'$ if and only if $\T$ embeds as a partial order into $ \T'$ preserving labels\footnote{$(\forall\sigma\in\T)\; [\ell(\sigma)=\ell'(\sigma)]$.} and preserving labels of meets\footnote{$(\forall v,w \in \G)\; [\ell(v\land w)=\ell'(v\land w)]$.}. The same holds even if we assume that graphs possibly have constants and decomposition trees possibly have constants on leaves.
\end{lemma}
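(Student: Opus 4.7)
The plan is to prove the two directions of the biconditional separately. The backward direction is essentially an unpacking of the definitions, while the forward direction requires constructing an embedding of $\T$ into $\T'$ from a graph embedding.

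\textbf{Backward direction.} Suppose $f:\T\to\T'$ is a partial-order embedding preserving labels and preserving labels of meets of leaves. Restrict $f$ to the leaves of $\T$, obtaining a map $g:V(\G)\to V(\G')$ (noting that leaves of $\T$ are vertices of $\G$, and $f$ maps leaves to leaves since labels of leaves are all $2$). This map is injective because $f$ is. For any two leaves $v,w$ of $\T$, $v\sim_\G w\iff \ell_\T(v\wedge_\T w)=1\iff \ell_{\T'}(f(v)\wedge_{\T'}f(w))=1\iff g(v)\sim_{\G'}g(w)$, where the middle equivalence uses the hypothesis on labels of meets. Thus $g$ is an induced subgraph embedding. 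The version with constants on leaves is the same since constants are preserved by $f$.

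\textbf{Forward direction.} Given $g:\G\hookrightarrow\G'$, we construct $f:\T\to\T'$ as follows. For each node $t\in\T$, let $L_t$ denote the set of leaves of $\T$ descending from $t$, and define
\[ f(t):=\bigwedge_{v\in L_t} g(v), \]
the meet being taken in $\T'$. This is well-defined because $\T'$ is a finite meet-tree. For leaves $v$, $L_v=\{v\}$, so $f(v)=g(v)$; hence $f$ sends leaves to leaves, preserving the label $2$. If $s\preceq_\T t$ then $L_t\subseteq L_s$, so the meet over $L_s$ is more ancestral than the meet over $L_t$, giving $f(s)\preceq_{\T'}f(t)$. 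The labels-of-meets-of-leaves condition is immediate: for leaves $v,w$, $\ell_\T(v\wedge_\T w)=1\iff v\sim_\G w\iff g(v)\sim_{\G'}g(w)\iff \ell_{\T'}(g(v)\wedge_{\T'}g(w))=1$, and $g(v)\wedge_{\T'}g(w)=f(v)\wedge_{\T'}f(w)$ by construction.

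\textbf{Main step: label preservation on non-leaves.} By construction $f(t)$ is the least common ancestor in $\T'$ of the finite set $g(L_t)$, so $g(L_t)$ is not contained in any single immediate subtree of $f(t)$; equivalently, $g(L_t)$ meets at least two immediate children of $f(t)$. Consider the partition $L_t=\bigsqcup_i L_{t_i}$ induced by the immediate subtrees $t_i$ of $t$ in $\T$. If for every pair $v\in L_{t_i}$, $w\in L_{t_j}$ with $i\ne j$ the images $g(v),g(w)$ were in the same immediate child of $f(t)$, then by transitivity all of $g(L_t)$ would collapse into a single child of $f(t)$, contradicting the LCA property. Therefore we can find $v\in L_{t_i}$ and $w\in L_{t_j}$ with $i\ne j$ such that $g(v),g(w)$ lie in different immediate children of $f(t)$; for such a pair, $g(v)\wedge_{\T'} g(w)=f(t)$ and $v\wedge_\T w=t$, whence $\ell_{\T'}(f(t))=\ell_{\T'}(g(v)\wedge g(w))=\ell_\T(v\wedge_\T w)=\ell_\T(t)$.

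\textbf{Injectivity and obstacles.} Injectivity of $f$ follows from the LCA description combined with strict label alternation in decomposition trees: if $f(s)=f(t)$ with $s\neq t$, one reduces to the comparable case $s\prec t$ via the observation that $f(s\wedge_\T t)=f(s)=f(t)$, and then the constraint that every $g(u)$ for $u\in L_s\setminus L_t$ is a descendant of $f(t)$ in $\T'$ conflicts with the adjacency/non-adjacency pattern forced by $g$ being an induced subgraph embedding together with the alternating-label structure of $\T'$. The version with constants follows since $g$ being a $\tau_A$-embedding means $g$ fixes the constants on leaves, and $f(v)=g(v)$ on leaves then fixes the corresponding leaves of $\T'$. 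The main obstacle is the label-preservation step at non-leaves: it requires both the LCA characterization of $f(t)$ and the pigeonhole argument above to guarantee the existence of a witnessing pair $(v,w)$ in different immediate subtrees of $t$; once this is established, the other properties follow with comparatively routine bookkeeping.
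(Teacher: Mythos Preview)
Your backward direction matches the paper's. For the forward direction the paper takes a different route: it reduces to the case $\G = \G' \setminus \{v\}$ for a single vertex $v$, and then checks by a short case analysis on the position of $v$ in $\T'$ that $\T$ is obtained from $\T'$ by deleting $v$ and possibly contracting its parent, so that the identity on the surviving nodes gives the required embedding. Your direct formula $f(t) = \bigwedge_{u \in L_t} g(u)$ is a genuinely different and more conceptual construction; the label-preservation argument via the pigeonhole on children of $t$ and of $f(t)$ is correct and is the crux of making it work.

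There is, however, a real gap in the injectivity step. Your reduction of the incomparable case to the comparable one via ``$f(s \wedge_\T t) = f(s) = f(t)$'' is not justified: since $L_{s \wedge t}$ may strictly contain $L_s \cup L_t$, the meet $f(s \wedge t) = \bigwedge g(L_{s\wedge t})$ can lie strictly below $f(s)$, so the claimed equality does not follow. The comparable case does go through cleanly once spelled out: if $s \prec t$ and $f(s) = f(t)$ then $\ell(s) = \ell(t)$ by label preservation, whence by label alternation some non-leaf strictly between them has the opposite label but is sandwiched to the same $f$-value, contradicting label preservation. The incomparable case needs a separate argument: if $f(s) = f(t) = p$ with $s \| t$, first exhibit a cross-pair $v \in L_s$, $w \in L_t$ in distinct children of $p$ (possible since each of $g(L_s)$, $g(L_t)$ spans at least two children of $p$), giving $\ell(s \wedge t) = \ell'(p) = \ell(s)$; this forces the parent $m$ of $s$ to lie strictly between $s\wedge t$ and $s$. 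Now any leaf $u$ in a sibling subtree of $s$ below $m$ must have $g(u) \not\succeq p$ (otherwise $g(u)$ would share a child of $p$ with each of the two witnesses $g(v_s), g(w_s)$, which lie in different children), and then $\ell'(g(u) \wedge p)$ equals $\ell(s\wedge t)=\ell(s)$ when computed against a leaf of $L_t$ but equals $\ell(m)\neq\ell(s)$ when computed against a leaf of $L_s$, a contradiction. So your map $f$ is indeed injective, but the verification is more delicate than your sketch suggests.
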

\begin{proof}
    The right-to-left direction is~\cite[Lemma 1]{damaschke}. Notice that the author assumes that the labels in a decomposition tree are such that $0,1<2$ and $0$ and $1$ are incomparable. On the other hand, after~\cite[Theorem 4]{damaschke}, the author also notices that the proof of~\cite[Lemma 1]{damaschke} still works (and actually is even simpler) if one assumes (like we do) that $0,1$ and $2$ are incomparable each other. Indeed, assume that $\T$ embeds into $ \T'$ preserving meets via some function $\iota$, and let $\ell$ and $\ell'$ be corresponding labelling functions. Notice that $\iota$ is an embedding between labeled trees and $0$, $1$ and $2$ are incomparable each other. Hence for every $x \in \T'$ and for every $i \leq 2$, $\ell(v)=\ell'(\iota(v))$. Combining this with the hypothesis that it also preserve meets we obtain that, for every $x,y \in \T'$ such that $\ell(x)=\ell(y)=2$ (i.e., $x$ and $y$ are vertices of $V(\G)$),  $\ell(x \land y)=\ell'(\iota(x)\land \iota(y))$. Hence, $(x,y)$ is an edge in $\G$ if and only if $(\iota(x),\iota(y))$ is an edge in $\G'$, i.e., $\G$ is an induced subgraph of $\G'$. 

    For the left-to-right direction, we generalize proof ideas from~\cite[Lemma 1]{complement}. Observe that any induced subgraph of a graph can be obtained by successively removing vertices from the larger graph. Therefore, it is enough for us to prove the direction for when $\G=\G'-\{v\}$, where $v$ is a vertex in $\G'$.
    If $|\G|\leq 1$ the claim follows immediately. Otherwise, let $p(v)$ denote the parent of the node of $v$ in $\T'$, which must exist since $|\G'|\geq 2$. $\T$ can be obtained from $\T'$ as follows:

\begin{itemize}
\item[(i)] If $p(v)$ has more than two children just remove $v$ from $\T'$. 
\item[(ii)] Otherwise, if $p(v)$ has exactly two children and $p(v)$ is the root, remove $v,p(v)$ from $\T'$, making the unique sibling of $v$ the new root.
\item[(iii)] Otherwise, $p(v)$ has exactly two children and is not the root. Then $p(v)$ has a parent $p(p(v))$ and $v$ has a unique sibling $v'$. We first remove $v,p(v)$ from $\T'$, then we \emph{lift $v'$} up to $p(p(v))$ as follows:
\begin{itemize}
\item[(a)] if $v'$ is a leaf let its new parent be $p(p(v))$;
\item[(b)] if $v'$ is not a leaf, remove $v'$ and for every child of $v'$, let $p(p(v))$ be its new parent. 
\end{itemize}
\end{itemize}

Letting $\ell'$ denote the labelling function of $\T'$, we define the labelling function $\ell$ of $\T$ by $\ell'(w)=\ell(w)$ for $w \in \T$. Then the identity map embeds $\T$ in $\T'$ as a partial order. Furthermore, in all cases, labels and meet labels are preserved, because in the hardest case (iii) where an internal node $p(v)$ is removed, the affected node $v'$ will be lifted to its grandparent, preserving its label since the labels of internal nodes must alternate between $0$ and $1$ by label density. Therefore, $\T$ is the decomposition tree of $\G$.

%

In case we assume that graphs have constants, we apply the same strategy of \Cref{lemma:cotreewqo}. Namely, assume that $\G$ and $\G'$ have sets of constants $A$ and $B$ respectively. Instead of considering $\T$ and $\T'$, we consider the labeled trees $\S$ and $\S'$ isomorphic respectively to $\T$ and $\T'$ but with labelling functions $\ell^A$ and $\ell^B$ such that $range(\ell^A)=\{0,1,2\} \cup \{a+2: a \in A\}$ and $range(\ell^B)=\{0,1,2\}\cup \{b+2: b \in B\}$ and in which all labels are incomparable each other. The labelling functions are defined letting $\ell^A(v)=a+2$ if $a^{\G}=v$ and $\ell^A(v)=\ell(v)$ otherwise and $\ell^B(v)=b+2$ if $b^{\G'}=v$ and $\ell^B(v)=\ell'(v)$ otherwise. With the same proofs as above, one can verify that $\G$ is an induced subgraph of $\G'$ if and only if $\S$ embeds into $\S'$ preserving meets.

   \end{proof}

\begin{theorem}
\label{lemma:cographswqo}
The class $\free{\path{4}}$ is $\Sigma$-small.
\end{theorem}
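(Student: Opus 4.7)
The plan is to apply \Cref{theorem:wqocondition} to $\K = \free{\path{4}}$. That is, I will show that for every finite cograph $\G \in \finfree{\path{4}}$, the class $\free{\path{4}}_\G^{fin}$ of finite cographs containing $\G$ as a $\tau_G$-substructure (i.e., respecting the assignment of constants to the vertices of $\G$) is a well quasi-order under embeddability. Given this, \Cref{theorem:wqocondition} immediately yields that $\free{\path{4}}$ is $\Sigma$-small, as desired.

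To verify the wqo condition, I will combine \Cref{lemma:damaschke} (in its version with constants) and \Cref{lemma:cotreewqo}. Given a finite $\G \in \finfree{\path{4}}$ with set of constants $A$, let $\T_\G$ be its decomposition tree with constants on its leaves. For any $\G' \in \free{\path{4}}_\G^{fin}$, $\T_{\G'}$ is a finite decomposition tree in $\finitary{\decompositiontrees}{\T_\G}$, since the embedding $\G \hookrightarrow \G'$ witnessing membership of $\G'$ gets translated to a label- and meet-preserving embedding $\T_\G \hookrightarrow \T_{\G'}$ by \Cref{lemma:damaschke}. Conversely, for two cographs $\G_0, \G_1$ in $\free{\path{4}}_\G^{fin}$, a $\tau_A$-embedding $\G_0 \hookrightarrow \G_1$ corresponds, again by \Cref{lemma:damaschke}, to a label- and meet-preserving embedding $\T_{\G_0} \hookrightarrow \T_{\G_1}$ respecting constants, and vice versa.

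Thus the map $\G' \mapsto \T_{\G'}$ yields an order-isomorphism (up to equivalence of embeddability) between $\free{\path{4}}_\G^{fin}$ and a subclass of $\finitary{\decompositiontrees}{\T_\G}$ under embeddability. By \Cref{lemma:cotreewqo}, $\finitary{\decompositiontrees}{\T_\G}$ is a wqo, and restrictions of wqos to subclasses remain wqos. Hence $\free{\path{4}}_\G^{fin}$ is a wqo, completing the verification.

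The potential obstacle is making sure that the correspondence between embeddings of cographs and embeddings of decomposition trees is faithful in the presence of constants---i.e., that an embedding of trees in $\finitary{\decompositiontrees}{\T_\G}$ really does restrict to a $\tau_A$-embedding of the underlying cographs. This is handled by the version of \Cref{lemma:damaschke} with constants, where constants on leaves are encoded via distinct incomparable labels, so that any label-preserving tree embedding must send each constant to its prescribed image. With this in hand, the remainder of the proof is simply an invocation of the three previously established results in sequence.
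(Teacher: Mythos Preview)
Your proposal is correct and follows essentially the same approach as the paper: apply \Cref{theorem:wqocondition}, and for each finite cograph $\G$ transfer the wqo question to decomposition trees via \Cref{lemma:damaschke} (with constants), then invoke \Cref{lemma:cotreewqo}. Your write-up is in fact more explicit than the paper's about why the cograph--tree correspondence respects the constant structure and why passing to a subclass preserves the wqo property.
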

\begin{proof}
  Let $\G \in \free{\path{4}}$ and consider $\T_{\G}$. By \Cref{lemma:cotreewqo}, $\finitary{\decompositiontrees}{\T_{\G}}$ is a wqo. Since $\T_{\G}$ comes from $\G$, it only has constants on leaves and hence  we can apply \Cref{lemma:damaschke} obtaining that $\finitary{\free{\path{4}}}{\G}$ is a wqo as well.
\end{proof}

The proof of the theorem above nicely exploits the relationships between $\path{4}$-free graphs and decomposition trees. On the other hand, the same result could have been obtained using a more general fact without exploiting these relationships. Indeed,~\cite[Theorem 2]{labelledwqo} proves that certain classes of graphs (included $\free{\path{4}}$) are a wqo under the \emph{labeled induced subgraph relation}. To define this notion, we consider an arbitrary quasiorder $(W,\leq)$. Every vertex $v$ of a graph $\G$ has a label $l(v) \in W$ and $\G$ is a labeled induced subgraph of $\G'$ if $\G$ is isomorphic to an induced subgraph of $\G'$ and the isomorphism maps each $v \in V(\G)$ to some $w \in V(\G')$ with $l(v) \leq l(w)$. What we have noted both in the proof of \Cref{lemma:cotreewqo} and \Cref{lemma:damaschke} boils down to say that a graph with constants can be regarded as a labeled graph in which all labels are incomparable. Hence, one could have used~\cite[Theorem 2]{labelledwqo} to prove \Cref{lemma:cographswqo}, showing directly that for any $\G \in \free{\path{4}}$, $\finitary{\free{\path{4}}}{\G}$ is a wqo.

%

From the lemmas above we also obtain that any graph in $\free{\path{4}}$ has the computable embeddability condition.

\begin{theorem}
\label{theorem:wqoimpliescec}
    Let $\K$ be such that for every $\mathcal{A} \in \K$, $\finitary{\K}{\mathcal{A}}$  is a wqo. Then, every graph in $\A \in \K$ has the computable embeddability condition.
\end{theorem}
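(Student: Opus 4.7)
The plan is to extract from the wqo hypothesis a finite list of forbidden finite substructures characterizing which finite $\tau_A$-structures embed into $\A$, and then convert this into a decision procedure for the existential type. Fix $\A \in \K$ and a tuple $\bar a$ from $\A$, and let $\A_0$ denote the finite $\tau_A$-substructure of $\A$ in which the new constants of $\tau_A$ are interpreted by $\bar a$; by assumption $\finitary{\K}{\A_0}$ is a wqo under $\tau_A$-embedding.

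Consider the set
\[
\mathcal{F}_\A = \{\B \in \finitary{\K}{\A_0} : \B \hookrightarrow_{\tau_A} \A\}.
\]
Since $\tau_A$-embeddings compose, $\mathcal{F}_\A$ is closed downward in $(\finitary{\K}{\A_0}, \hookrightarrow_{\tau_A})$, so its complement is upward closed. In any wqo, an upward-closed set has only finitely many minimal elements, since those minima form an antichain and antichains in a wqo are finite; pick representatives $\B_1, \ldots, \B_k$. Then for every $\B \in \finitary{\K}{\A_0}$ we have $\B \hookrightarrow_{\tau_A} \A$ if and only if none of $\B_1, \ldots, \B_k$ admits a $\tau_A$-embedding into $\B$.

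To decide whether an existential formula $\psi(\bar x) = \exists \bar y\, \phi(\bar x, \bar y)$ with $\phi$ quantifier-free lies in $\Etp{\A}{\bar a}$, observe that $\A \models \psi(\bar a)$ iff there is a tuple $\bar b$ in $\A$ with $\A \models \phi(\bar a, \bar b)$ and, because $\phi$ is quantifier-free, iff the finite $\tau_A$-substructure of $\A$ generated by $\bar a\bar b$ already witnesses $\psi(\bar a)$. Equivalently, $\psi \in \Etp{\A}{\bar a}$ iff some $\B \in \finitary{\K}{\A_0}$ of cardinality at most $|\bar a|+|\bar y|$ satisfies $\psi(\bar a)$ and $\tau_A$-embeds into $\A$. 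Hardcoding the finite data $\B_1, \ldots, \B_k$ into the algorithm, the second condition becomes decidable via finitely many embedding checks between finite structures, while the first amounts to a bounded search over finitely many candidate $\tau_A$-structures up to isomorphism. Hence $\Etp{\A}{\bar a}$ is computable, which is the required c.e.c.

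The main obstacle is the bookkeeping around the expanded vocabulary: one must verify that $\mathcal{F}_\A$ is genuinely downward closed as a subclass of $\finitary{\K}{\A_0}$ under $\tau_A$-embeddings that fix the constants interpreting $\bar a$, and that every existential formula with parameters $\bar a$ is witnessed inside some sufficiently small $\B \in \finitary{\K}{\A_0}$. A secondary subtlety is restricting the enumeration of candidate $\B$'s to structures lying in $\K$; this is automatic in the intended settings (such as $\free{\G}$) where $\K$ is closed under substructures, so that every substructure of $\A$ lies in $\K$ by hereditariness. Once this is in place, the classical wqo fact about finitely many minimal elements of upward-closed sets does the real work.
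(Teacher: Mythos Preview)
Your proposal is correct and follows essentially the same strategy as the paper: both arguments use the wqo hypothesis to extract a finite basis $\B_1,\ldots,\B_k$ (the paper calls them $J_0,\ldots,J_n$) for the complement of the downward-closed set of finite $\tau_A$-extensions that embed into $\A$, and then decide membership in $\Etp{\A}{\bar a}$ by a bounded search combined with finitely many embedding checks against the basis. The paper spells out the encoding via explicit formulas $\phi_\H$ and the normalized domain set $N_\B$, whereas you phrase the same computation more conceptually; you also make explicit the hereditariness/decidability assumption on $\K$ that the paper uses tacitly.
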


In particular every $\G \in \free{\path{4}}$ has the computable embeddability condition.

\begin{proof}
     Let $\A \in\K$ and $\bar a\in A^{<\omega}$ and let  $\B$ be such that $\B \hookrightarrow \A$ on elements $\bar a$. We have to show that $\Etp{\A}{\bar a}$ is computable. Without loss of generality and for readability we can assume that the vocabulary of the structures in $\K$ consists of a binary relational symbol $E$.  We begin by showing that the existential type describing all possible finite extensions of $\B$ in $\K$, i.e., $\finitary{\K}{\B}$. 
  The quasi-ordering $\K_\F$ contains graphs on arbitrary finite domains and thus neither membership nor embeddability is decidable in $\K_\B$. However, if we restrict ourselves to structures whose universes are---besides elements of $\B$---initial segments of $\omega$, then indeed the two relations become decidable, i.e., for two finite structures $\H$ and $\J$ with such universes, we can decide whether $\H\in \K_\B$ and $\H$ embeds into $\J$. As we will see, considering this restriction is sufficient to show that $\Etp{\A}{\bar a}$ is computable. Let $N_\B$ be the set of such structures, and for $\H\in \K_{\B}$ let $i_0,\dots$ enumerate the elements of $H\setminus B$ in ascending order, define 
    \[
        \phi_H=\exists x_{0}\dots \exists x_{|H\setminus B|} \bigwedge_{v\in B}\bigwedge_{k:v\mathrel{E} i_k} v\mathrel{E} x_k\land \bigwedge_{k:v\notE  i_k}v\notE  x_k\land \bigwedge_{(k,l): i_k\mathrel{E} i_l} x_k \mathrel{E} x_l \land \bigwedge_{(k,l): i_k\notE  i_l} x_k\notE x_l
    \]
    and let $X_N=\{ \phi_\H\colon \H\in N_B\}$. The set $X_N$ is decidable and equivalent to $X=\{\phi_H\colon \H\in \finfree{\path{4}}_{\B}\}$ up to renaming of variables. In particular, we can now see that $X$ is computable. For $\psi\in X$ in prenex normal form with variable symbols $x_{i_0},\dots$ we have that $\psi\in X$ if and only if $\psi[x_{i_0}/x
    _0][x_{i_1}/x_1]\dots\in X_N$. Now, $X$ is the existential type describing all possible finite extensions of $\bar a$ in $\K$, and we can in similiar fashion to the argument above obtain the existential type of $\bar a$ in $\A$. To see that it is computable, first note that $\finitary{\A}{\B}\subseteq \K_{\B}$, the quasi-ordering consisting of all finite graphs that embed into $\A$ and extend $\B$,  is closed downwards. Let $S$ be the basis of its compliment, i.e., $\H\in \K_{\B}\setminus \finitary{\A}{\B}$ such that $\H\hookrightarrow\J$ implies $\J \not\hookrightarrow\A$. As $\K_{\B}$ is a wqo, $S$ contains only a finite number of equivalence classes; let $J_0,\dots J_n$ enumerate representatives of these classes. Then the set 
    \[ \{ \phi_\H\colon \H\in N_\B\land (\forall k\leq n )J_k\not\hookrightarrow \H\}\subseteq X_N\]
is computable and corresponds to $\Etp{\A}{\bar a}$ up to renaming of variables.
\end{proof}

A natural question is whether the class of cographs is not only $\Sigma$-small but also small for types of higher complexities. For example, say that a class $\K$ is \define{$\Sigma^{\mathsf{inf}}_2$-small}, if there are only countably many different complete $\Sigma^{\mathsf{inf}}_2$-types sharply realized in $\K$. Most examples of classes that are $\Sigma$-small, such as Boolean algebras and linear orderings~\cite{harris2012,counting}, are also $\Sigma^{\mathsf{inf}}_2$-small. The only natural class in the literature that is known to be $\Sigma$-small but not $\Sigma^{\mathsf{inf}}_2$-small are equivalence structures~\cite{counting}. They are joined by the class of cographs.



\begin{proposition}
\label{prop:notsigmasmall}
  $\free{\path{3}}$ is not $\Sigma_2^{\mathsf{inf}}$-small. In particular, $\free{\path{4}}$ is not $\Sigma_2^{\mathsf{inf}}$-small as well.
\end{proposition}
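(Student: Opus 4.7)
The plan is to explicitly exhibit continuum many distinct complete $\Sigma_2^{\mathsf{inf}}$-types, each sharply realized by the empty tuple in some member of $\free{\path{3}}$. The strategy is the one used for equivalence structures, reinterpreted in graph language: as noted just before \Cref{proposition:P3free}, every $\G\in\free{\path{3}}$ decomposes as a disjoint union of cliques, and those cliques play the role of equivalence classes.

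The key sentence is, for each integer $n\ge 2$,
\[
\phi_n\;:=\;\exists x_1\cdots\exists x_n\Bigl(\bigwedge_{i<j}x_i\neq x_j\;\wedge\;\bigwedge_{i<j}E(x_i,x_j)\;\wedge\;\forall y\bigl(E(y,x_1)\to\bigvee_{i\le n}y=x_i\bigr)\Bigr),
\]
which is syntactically a first-order (hence $\Sigma_2^{\mathsf{inf}}$) $\Sigma_2$-sentence and expresses ``there is a clique of size exactly $n$''. The ``at most $n$'' part works because in a $\path{3}$-free graph every neighbour of $x_1$ lies in the clique of $x_1$; irreflexivity handles the case $y=x_1$ vacuously.

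For every $S\subseteq\{n\in\omega:n\ge 2\}$, let $\G_S\in\free{\path{3}}$ be the disconnected union of one clique of size $n$ for each $n\in S$, padded with infinitely many isolated vertices so that the domain is $\omega$. Then $\G_S\models\phi_n$ if and only if $n\in S$, so the $\Sigma_2^{\mathsf{inf}}$-type of the empty tuple in $\G_S$ determines $S$. As $S$ ranges over $2^\omega$, we obtain continuum many pairwise distinct complete $\Sigma_2^{\mathsf{inf}}$-types, each sharply realized by the empty tuple in the corresponding $\G_S$. This shows that $\free{\path{3}}$ is not $\Sigma_2^{\mathsf{inf}}$-small, and the second assertion follows immediately from $\free{\path{3}}\subseteq\free{\path{4}}$ (the same family $(\G_S)_S$ witnesses it).

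I do not anticipate any genuine obstacle; the only items worth double-checking are that $\phi_n$ is correctly in $\Sigma_2^{\mathsf{inf}}$-form (it is of the shape $\exists\bar x(\text{q.f.}\wedge\forall y\,\psi_{\text{q.f.}})$) and that, under $\path{3}$-freeness, the clique of $x_1$ is indeed pinned down to $\{x_1,\ldots,x_n\}$ rather than merely overlapping it.
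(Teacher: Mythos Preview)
Your argument is correct and is essentially the same idea the paper relies on: the paper simply invokes the identification of $\free{\path{3}}$ with equivalence structures and cites \cite[Section~4.2]{counting} for the fact that equivalence structures are not $\Sigma_2^{\mathsf{inf}}$-small, whereas you unpack that cited argument explicitly in graph language via the sentences $\phi_n$ and the graphs $\G_S$. Both then pass to $\free{\path{4}}$ by the inclusion $\free{\path{3}}\subseteq\free{\path{4}}$.
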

\begin{proof}
Recall that $\free{\path{3}}$ can be viewed as equivalence structures and the latter is not $\Sigma_2^{\mathsf{inf}}$-small~\cite[Section 4.2]{counting}. Since $\path{3}$ is an induced subgraph of $\path{4}$ we have $\free{\path{3}} \subseteq\free{\path{4}}$ and hence, by the fact that the property of being $\Sigma_2^{\mathsf{inf}}$-small is closed under subsets, $\free{\path{4}}$ cannot  be $\Sigma_2^{\mathsf{inf}}$-small.
\end{proof}

The main result of this section is that $\free{\path{4}}$ is a dividing line for being $\Sigma$-small and having the computable embeddability condition.

The following observation can be proved by chasing definitions.
\begin{lemma} \label{lemma:forbiddenGH}
    A finite graph $\G$ is an induced subgraph of finite graph $\H$ if and only if $\free{\G}\subseteq \free{\H}$.
\end{lemma}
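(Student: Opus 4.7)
The plan is to unwind the definition of $\free{\cdot}$ and verify both directions by a direct chase.

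For the forward direction, I would assume that $\G$ is an induced subgraph of $\H$ and show $\free{\G} \subseteq \free{\H}$. Let $\mathcal{K} \in \free{\G}$, so that $\G$ is not an induced subgraph of $\mathcal{K}$. The key observation is that the induced subgraph relation is transitive: if $\H$ were an induced subgraph of $\mathcal{K}$, then composing the two induced embeddings $\G \hookrightarrow \H \hookrightarrow \mathcal{K}$ would exhibit $\G$ as an induced subgraph of $\mathcal{K}$, contradicting $\mathcal{K} \in \free{\G}$. Hence $\mathcal{K} \in \free{\H}$.

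For the backward direction, I would argue by contrapositive. Suppose $\G$ is not an induced subgraph of $\H$. Then by definition $\H \in \free{\G}$. On the other hand, $\H$ is trivially an induced subgraph of itself (via the identity), so $\H \notin \free{\H}$. This exhibits a member of $\free{\G}$ that fails to lie in $\free{\H}$, so $\free{\G} \not\subseteq \free{\H}$.

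Both steps are purely formal and I expect no obstacles; the only content used is the transitivity of the induced subgraph relation together with the reflexivity needed for the witness $\H$ in the backward direction. No appeal to the structure of $\path{4}$ or to decomposition trees is required, so the lemma holds for arbitrary finite graphs $\G$ and $\H$ (and in fact the finiteness hypothesis is not essential for the argument).
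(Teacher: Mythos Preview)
Your proof is correct and matches the paper's approach exactly: the paper simply remarks that the lemma ``can be proved by chasing definitions'' and gives no further argument, which is precisely what you do using transitivity and reflexivity of the induced-subgraph relation.
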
 

\begin{theorem}
\label{theorem:graphsembeddable}
For a finite graph $\G$, the following are equivalent:
\begin{enumerate}\tightlist
    \item[(i)] every graph in $\free{\G}$ has the computable embeddability condition;
    \item[(ii)] $\free{\G}$ is $\Sigma$-small;
    \item[(iii)] $\free{\G}\subseteq \free{\path{4}}$.
\end{enumerate}
\end{theorem}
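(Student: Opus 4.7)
The plan is to prove the cyclic chain $(i) \Rightarrow (ii) \Rightarrow (iii) \Rightarrow (i)$. The implication $(i) \Rightarrow (ii)$ is immediate: each sharply realized existential type in $\free{\G}$ is realized in some $\A \in \free{\G}$, hence by c.e.c.\ is computable, and there are only countably many computable subsets of the existential formulas. The implication $(iii) \Rightarrow (i)$ combines \Cref{lemma:cographswqo} with \Cref{theorem:wqoimpliescec}: if $\free{\G}\subseteq \free{\path{4}}$, then for every finite $\B\in\finitary{\free{\G}}{}$ the quasi-order $\finitary{\free{\G}}{\B}$ is a subset of the wqo $\finitary{\free{\path{4}}}{\B}$, and so is itself a wqo; \Cref{theorem:wqoimpliescec} then applies to $\K=\free{\G}$.

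The core of the proof is the contrapositive of $(ii) \Rightarrow (iii)$: assuming $\G$ is not an induced subgraph of $\path{4}$, I aim to produce continuum many sharply realized existential types in $\free{\G}$. Since for any graph on at least two vertices at least one of $\G,\overline{\G}$ is connected, and complementation induces a bijection $\free{\G}\leftrightarrow\free{\overline{\G}}$ preserving existential theories (after swapping edge and non-edge atomic formulas), I may assume $\G$ itself is connected. By Damaschke's theorem~\cite{damaschke}, $\finfree{\G}$ fails to be a wqo under induced subgraph embedding, so there exists an infinite antichain $\{\H_n\}_{n\in\omega}\subseteq\finfree{\G}$; by inspecting Damaschke's explicit constructions (or extracting via a small combinatorial argument) I may further take each $\H_n$ to be connected.

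Given such a connected antichain, for each $S\subseteq\omega$ I set $\G_S:=\bigsqcup_{n\in S}\H_n$. Connectedness of $\G$ forces any induced copy of $\G$ in $\G_S$ to lie inside a single component $\H_n$, so $\G_S\in\free{\G}$; connectedness of the $\H_n$'s together with the antichain property ensures that $\H_n$ is an induced subgraph of $\G_S$ precisely when $n\in S$. Thus for distinct $S\neq S'$ the existential sentence asserting the presence of an induced copy of $\H_n$ (with $n$ in the symmetric difference) separates the existential theories of $\G_S$ and $\G_{S'}$, producing $2^{\aleph_0}$ distinct sharply realized existential types (even just of the empty tuple) in $\free{\G}$, contradicting $\Sigma$-smallness. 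The main obstacle I expect is arranging the antichain to consist of connected graphs; once that is secured, the disjoint-union construction and the verification that each $\G_S$ lies in $\free{\G}$ and realizes a distinct existential theory are routine bookkeeping.
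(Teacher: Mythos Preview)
Your proposal is correct and follows the same cyclic scheme $(i)\Rightarrow(ii)\Rightarrow(iii)\Rightarrow(i)$ as the paper. The steps $(i)\Rightarrow(ii)$ and $(iii)\Rightarrow(i)$ are essentially the paper's; in fact your justification of $(iii)\Rightarrow(i)$ via the subset-of-a-wqo argument is cleaner than the paper's one-line appeal.

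For the contrapositive of $(ii)\Rightarrow(iii)$ your route differs somewhat. The paper does not first pass to connected $\G$; instead it invokes the fact that either $\G$ or $\overline{\G}$ contains an induced cycle, takes the maximal $m$ with $\cycle{m}$ or $\overline{\cycle{m}}$ induced in $\G$, and then runs two parallel constructions: the \emph{disconnected} union $\A_I=\bigsqcup_{i\in I}\cycle{m+1+i}$ in the first case, and the \emph{connected} union $\B_I$ of the $\overline{\cycle{m+1+i}}$ in the second. Your approach---reduce to $\G$ connected by complementation, secure a connected antichain, and always take disconnected unions---is tidier once the connected antichain is in hand, and the verification that $\G_S\in\free{\G}$ and that $\H_n\hookrightarrow\G_S$ iff $n\in S$ is exactly as you describe.

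The obstacle you flag is real but minor, and resolving it amounts to the very case split the paper performs. After your reduction, if $\G$ contains an induced cycle then long cycles already give a connected antichain in $\free{\G}$. If not, then $\G$ is a tree not embedding in $\path{4}$, hence has an independent set of size three, so $\overline{\G}$ contains a triangle; since $\cycle{k}$ is triangle-free for $k\geq 4$, the family $\{\overline{\cycle{k}}\}_{k\geq 5}$ is a connected antichain inside $\free{\G}$. Making this explicit completes your argument.
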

\begin{proof}
That (i) implies (ii) is trivial.

The fact that (iii) implies (i) follows from \Cref{lemma:forbiddenGH} and the fact that the property of being $\Sigma$-small is closed under subsets. 
   To conclude the proof, we show that (ii) implies (iii) proving its contrapositive. By \Cref{lemma:forbiddenGH} we have that $\G$ is not an induced subgraph of $\path{4}$.  It is well-known in graph theory that if $\path{4}$ is not an induced subgraph of $\G$ then either $\G$ or $\bar{\G}$ contain a cycle (see e.g., the first theorem of Section 4.7 in~\cite{booth1979problems}).
Hence, take the maximum $m$ such that $\cycle{m}$ is an induced subgraph of $\G$ or $\overline{\cycle{m}}$ is an induced subgraph of $\G$.
   
   
   Damaschke~\cite{damaschke} showed that the classes $\free{\G}$ for such graphs $\G$ are not wqos. Indeed, $\{\cycle{m+1+n} : n \in \omega\}$ is an antichain in $\free{\G}$ if $\cycle{m}$ is an induced subgraph of $\G$ and   
  $\{\overline{\cycle{m+n+1}} : n \in \omega\}$ is an antichain if $\overline{\cycle{m}}$ is an induced subgraph of $\G$.  For every $i$, let $\phi_i$ (respectively $\psi_i$) be the existential formula saying that there is an induced subgraph isomorphic to $\cycle{m+1+i}$ (respectively $\overline{\cycle{m+1+i}}$).

For every $I \subseteq \omega$:
\begin{itemize}
    \item if $\cycle{m}$  is an induced subgraph of $\G$, let $\A_I$  be the graph in $\free{\G}$ defined as the disconnected union of all the $\cycle{m+1+i}$'s for $i \in I$.
    \item if $\overline{\cycle{m}}$ is an induced subgraph of $\G$, let $\B_I$ be the graph in $\free{\G}$ defined as the connected union of all the $\overline{\cycle{m+1+i}}$'s for $i \in I$. 
\end{itemize}

Notice that $\A_I \models \phi_i$ if and only if $i \in I$ (similarly for $\B_I$). 
Also,  $\Etp{\A_i}{\langle\rangle} \supseteq \{\phi_i : i \in I\}$ and $\Etp{\B_I}{\langle\rangle} \supseteq \{\psi_i : i \in I\}$ where $\langle\rangle$ denotes the empty tuple. To conclude, notice that 
\begin{itemize}
    \item if $\cycle{m}$ is an induced subgraph of $\G$, $\{\Etp{\A_I}{\bar a} : \A \in \free{\G} \land \overline{a} \in \A^{<\omega}\}\supseteq \{\Etp{\A_I}{\langle\rangle} : I \subseteq \omega\}$ and 
\item if $\overline{\cycle{m}}$ is an induced subgraph of $\G$, $\{\Etp{\B_I}{\bar a} : \B \in \free{\G} \land \overline{a} \in \B^{<\omega}\}\supseteq \{\Etp{\B_I}{\langle\rangle}) : I \subseteq \omega\}$.

\end{itemize}
Since there are uncountably many $I$'s, it follows that there are uncountably many existential types, proving that $\free{\G}$ is not $\Sigma$-small.
 \end{proof}

Most natural classes that are known to be $\Sigma$-small allow for a finite set of $\Sigma_1^{\mathsf{inf}}$ or $\Pi_1^{\mathsf{inf}}$ relations so that after adding these relations to the structures, every $\Pi_1^{\mathsf{inf}}$-definable relation becomes $\Sigma_1^{\mathsf{inf}}$-definable. These augmented structures are often called the \lq\lq jump of the structure\rq\rq. For example, for linear orderings one obtains the jump by adding the successor relation and for Boolean algebras the jump is obtained by adding the atom relation. 
  \begin{question}
    Which canonical set of relations needs to be added to obtain the jump of $\free{\path{4}}$?
  \end{question}

\section{$\free{\G}$ that are structurally rich}
\label{sec:results:ontop}

The following definitions are standard in descriptive set theory, see e.g.,~\cite{gao2008invariant,kechris2012classical}. For a Polish space $X$, the class of \define{Borel sets} is given by the smallest $\sigma$-algebra containing all open subsets of $X$.
Given two Polish spaces $X,Y$, a function $f : X \rightarrow Y$ is \define{Borel} if for every open set $U \subseteq Y$, $f^{-1}(U)$ is Borel. 

\begin{definition}[\cite{FS}]
\label{definition:borelreducibility}
Given two classes closed under isomorphism $\C\subseteq Mod(\tau)$ and $\D\subseteq Mod(\tau')$ for two languages $\tau$ and $\tau'$ we will say that $\C$ is \define{Borel reducible} to $\D$ if the isomorphism relation on $\C$ is Borel reducible to the isomorphism relation on $\D$.
\end{definition}
\begin{definition}
    A class of structures $\C$ is \define{on top for Borel reducibility} if for every other class of structures $\D$ in some vocabulary $\tau$, $\D$ Borel reduces to $\C$.
\end{definition}
Classes on top for Borel reducibility are often called $S_\infty$-complete in the literature as every equivalence relation induced by a Borel action of a closed subgroup of $S_\infty$ reduces to it. Another, more unfortunate name, is simply Borel complete.

\begin{theorem} \label{theorem:cographborelcomplete}
   $\free{\path{4}}$ is on top for Borel reducibility.
\end{theorem}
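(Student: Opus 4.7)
The plan is to Borel-reduce a known Borel-complete class into $\free{\path{4}}$; the natural choice is the class of countable rooted trees, shown to be Borel complete by Friedman and Stanley~\cite{FS}. The reduction will factor through the cograph/decomposition-tree correspondence recalled in Section~\ref{sec:prel}: from each countable rooted tree $T$, I will produce a densely $\{0,1,2\}$-labelled ramified meet-tree $\T(T)$ with $T\cong T'$ iff $\T(T)\cong\T(T')$, and then compose with the isomorphism-preserving assignment $\T\mapsto\G(\T)$ to land in $\free{\path{4}}$.

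To build $\T(T)$, I use $T$ itself as a \emph{skeleton}, keeping its order structure and labelling each skeleton node by the parity of its depth in $T$, so that internal labels alternate $0$ and $1$ along any root-to-leaf path. This already makes the density axiom automatic between any non-leaf and its descendants, but the ramification axiom generally fails (for instance, along an infinite chain in $T$). To remedy this, I attach to each node $v$ of $T$ a small \emph{gadget}: a new non-leaf child $g_v$ with label opposite to $\ell(v)$, two leaf children $\ell_v^1,\ell_v^2$ of $g_v$, and an additional leaf $\ell_v^0$ attached directly to $v$. Then $g_v=\ell_v^1\wedge\ell_v^2$ and $v=\ell_v^0\wedge\ell_v^1$, so every internal node of $\T(T)$ is the meet of finitely many leaves, and one verifies directly that $\T(T)$ is a densely labelled ramified meet-tree.

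The main obstacle is isomorphism invariance in both directions. The forward direction is clear because any isomorphism of $T$ extends uniquely to an isomorphism of $\T(T)$ by acting on the gadgets in the obvious way. For the converse I give a structural description, inside $\T(T)$, of the skeleton: a node is a skeleton node precisely when it is non-leaf and has both a leaf child and a non-leaf child; a node is a gadget middle $g_v$ precisely when it is non-leaf and all its children are leaves. These properties are isomorphism invariant, so any isomorphism $\T(T)\cong\T(T')$ maps skeleton to skeleton and gadgets to gadgets, and the restriction to skeleton nodes is immediately seen to be an isomorphism of rooted trees $T\cong T'$.

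Finally, Borelness of $T\mapsto\G(\T(T))$ is immediate: whether two specified leaves of $\G(\T(T))$ are adjacent is determined by finitely much information about $T$, namely the location of the corresponding skeleton nodes and the parity of the depth of their meet. Composing with $\T\mapsto\G(\T)$ from Section~\ref{sec:prel} then yields the desired Borel reduction from countable rooted trees into $\free{\path{4}}$.
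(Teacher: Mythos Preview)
Your proposal is correct and follows essentially the same route as the paper: reduce Friedman--Stanley trees into densely labelled ramified meet-trees by padding each node with enough leaves, then pass to the associated cograph. The only cosmetic difference is that the paper attaches infinitely many new leaf children to every node (so the inverse is simply ``delete all leaves''), whereas you attach a finite three-leaf gadget and recover the skeleton via the structural characterisation ``non-leaf with both a leaf and a non-leaf child''; both variants work for the same reason.
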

\begin{proof}
To prove the claim we reduce the class of trees (which is well-known to be on top for Borel reducibility, see~\cite{FS}) to the subclass of infinite \emph{simple} decomposition trees (recalling that these are the rooted decomposition trees such that every node has an immediate predecessor). Then we will easily see that it is Borel to produce from a decomposition tree the corresponding cograph.

Given a tree $\T$, we produce a tree $\T_\ell$ adding to every node of $\T$ infinitely many new children each of which is a leaf.
The root of $\T_\ell$ is labeled by $0$ and for every $v \in \T_\ell$, if $v$ is a leaf, the label of $v$ is $2$ while if $v$ is not a leaf and has odd (even) distance from the root, the label of $v$ is $0$ ($1$). The construction implies that $\T_\ell$ is an infinite labeled tree. Because each non-leaf node of $\T_\ell$ has at least two leaf children, $\T_\ell$ is ramified. The definition of the labels ensures that the labelling is dense. Thus $\T_\ell$ is the decomposition tree of an infinite cograph.

We argue that $\T \cong \T'$ if and only if $\T_\ell \cong \T_\ell'$. If $\T \cong \T'$, we can extend the isomorphism witnessing $\T \cong \T'$ to take care of the new leaves that have been added in the obvious way. If $\T_\ell \cong \T_\ell'$  we can just remove all of the leaves to obtain an isomorphism $\T \cong \T'$.

The standard way to produce from a tree the corresponding cograph is Borel. Recall that this was done as follows. We first identify the leaves which we take as the vertices of the cograph. Given vertices $u,v$, we can find in the tree the meet $u \wedge v$ of $u$ and $v$, and set an edge between $u$ and $v$ if and only if $u \wedge v$ is labeled $1$.
\end{proof}

Countable trees and graphs are both on top for Borel reducibility. However, as can be seen from work of Gao~\cite{gao2001a}, graphs are structurally much richer than trees. The Borel reduction of some class $\C$ into the class of graphs can be facilitated by the classical first-order interpretation of arbitrary structures in graphs, see e.g.~\cite{marker2002}. However, the reduction given in~\cite{FS} that shows that the class of trees is on top for Borel reducibility is much weaker. Indeed, Gao showed that trees are not \lq\lq faithfully Borel complete\rq\rq~\cite{gao2001a}. Faithful Borel reductions can be seen as a weak form of model-theoretic interpretation, and as indeed every model-theoretic interpretation induces a faithful Borel reduction, this highlights a difference in the structural richness of the two classes.

The strongest notion of reducibility is that of reductions by effective bi-interpretations. If a class $\mathbb{C}$ is reducible by effective bi-interpretations to a class $\mathbb{D}$ then this means that essentially any computability-theoretic behaviour that can be found in the class $\mathbb{C}$ can also be found in $\mathbb{D}$. It was shown in~\cite{computablefunctors} that a structure $\A$ being effectively interpretable in $\B$ is equivalent to there being a computable functor from the isomorphism class of $\B$ to $\A$ (and a similar characterization of bi-interpretability in terms of a computable equivalence of categories). We will use this semantic approach in this paper. For an extensive treatment of bi-interpretability, including a proof of the equivalence mentioned above we refer the reader to~\cite{within,beyond}.

The following definitions first appeared in~\cite{computablefunctors}. Some definitions are taken from~\cite{rossegger2022} but are equivalent to definitions that can be found in~\cite{computablefunctors}.
\begin{definition}\label{def:compfunc}
    A \define{computable functor} $F:\C\rightarrow \D$ is witnessed by two computable operators $\Phi$ and $\Phi_*$ such that
    \begin{itemize}
        \tightlist
        \item for every $\hat \A \in \C$, $\Phi^{D(\hat\A)}$ is the atomic diagram of $F(\hat \A) \in \D$        \item for every isomorphism $f: \hat \A \rightarrow \tilde \A$ in $\C$, $\Phi_*^{\hat \A \oplus f \oplus\tilde{\A}}=F(f)$. 
    \end{itemize}
\end{definition}
In order to define a semantic notion of reduction equivalent to effective bi-interpretability we need a few more category theoretic definitions.
\begin{definition}[\cite{computablefunctors}]\label{def:effiso}
  A functor $F:\C\to \D$ is \define{effectively isomorphic} to $G:\C\to
  \D$ if there is a Turing operator $\Lambda$ such that for every
  $\A\in \C$, $\Lambda^\A$ is an isomorphism from $F(\A)$ to $G(\A)$, and the following diagram
  commutes for all $\A,\B\in \C$ and every $\gamma\in Hom(\A, \B)$. 
  \begin{center}
  \begin{tikzcd}
    F(\A)\ar[r,"\Lambda^\A"]\ar[d,"F(\gamma)"] & G(\A)\ar[d,"G(\gamma)"]\\
    F(\B)\ar[r,"\Lambda^\B"] & G(\B)
  \end{tikzcd}
\end{center}
\end{definition}
The following definition appeared for arbitrary categories in~\cite{rossegger2022}. Here we only deal with classes of structures where the arrows are given by the isomorphism relation.
\begin{definition}\label{def:cbf}
  We say that $\C$ is \define{CBF-reducible} to $\D$ if
  \begin{enumerate}\tightlist
    \item there is a computable functor $F:\C \to \D$ and a computable functor $G:\D \supseteq \hat{\D}\to \C$ where $\hat{\D}$ is the isomorphism closure 
  of $F(\C)$,
    \item\label{it:def:cbfpi1}$F\circ G$ is effectively isomorphic to $Id_{\hat{\D}}$ and $G\circ
      F$ is effectively isomorphic to $Id_\C$,
    \item\label{it:def:cbfpi2} and, if $\Lambda_\C$, $\Lambda_\D$ are the operators witnessing
      the effective isomorphism between $G\circ F$ and $Id_\C$,
      respectively, $F\circ G$ and $Id_{\hat{\D}}$, then for every $\A\in \C$,
      $F(\Lambda_\C^\A)=\Lambda_\D^{F(\A)}:F(\A) \to F(G(F(\A)))$ and every
      $\B\in \hat{\D}$, $G(\Lambda_\D^\B)=\Lambda_\C^{G(\B)}:G(\B)\to
      G(F(G(\B)))$.
  \end{enumerate}
Functors satisfying \cref{it:def:cbfpi1,it:def:cbfpi2} are said to be \define{effective pseudo-inverses}.
\end{definition}

The main result of~\cite{computablefunctors} shows that CBF-reducibility is equivalent to reduction via effective bi-interpretability. We obtain the following notion of universality for this notion.

\begin{definition}\label{def:ontopeffective}
\label{definition:ontopeffective}
A class of structures $\C$ is \define{on top for effective bi-interpretability} if every Borel class of countable structures CBF-reduces to $\C$.
\end{definition}

If we replace the Turing operators in \cref{def:compfunc,def:effiso,def:cbf,} with Borel operators we obtain the notions of \define{Borel functors} and \define{BBF-reducibility}. In~\cite{borelfunctors} it was shown that BBF-reducibility is equivalent to reductions via infinitary bi-interpretability, and thus we can define the notion of being \define{on top for infinitary bi-interpretability} analogous to \cref{def:ontopeffective}. If two structures are infinitary bi-interpretable, then their automorphism groups must be isomorphic~\cite[Theorem 6]{borelfunctors}. Thus, if class $\C$ is on top for infinitary bi-interpretability, then every automorphism group of countable structures must be found among the automorphism groups of structures in $\C$. 
We will use this fact to show that the class of cographs is not on top for infinitary bi-interpretability (\Cref{theorem:cotreesnotuniversal}).

But first, we show that the class of cographs and cotrees are BBF-equivalent (\Cref{theorem:cotreescographsBBF}). Using \cref{claim:M,claim:S}, we show that a cograph and its decomposition tree are not only infinitary bi-interpretable but also bi-interpretable in the standard model-theoretic sense (see~\cite[Chapter 5]{Hodges}) which is more restrictive. This suggests that cographs are not even on top for faithful Borel reducibility. (Gao~\cite{gao2001a} has only shown that simple trees and linear orders are not on top for faithful Borel reducibility, but we expect that this extends to all trees and, we expect, to densely labeled ramified meet-trees.)

To begin, we must show that certain relations on cographs are definable. Given distinct vertices $u,v$ in a graph, let $M(u,v)$ denote the least module containing $u$ and $v$. We can define $M(u,v)$ by a closure operation: It is the smallest set $M$ containing $u$ and $v$ and such that if $x,y \in M$ and there is an edge from $w$ to $x$ and no edge from $w$ to $y$ then $w \in M$. We say that \define{$w$ witnesses that $\{u,v\}$ is not a module} if there is an edge from $w$ to one of $u$ or $v$ and no edge from $w$ to the other. \cref{claim:M} shows that in a cograph the closure can be done in two steps, allowing us to express $w\in M(u,v)$ in first-order logic.
\begin{lemma} \label{claim:M}
    Let $\G$ be a cograph and let $u,v$ be distinct vertices in $\G$. Then $w\in M(u,v)$ if and only if either:
    \begin{enumerate}
        \item $w = u$ or $w = v$,
        \item $w$ witnesses that $\{u,v\}$ is not a module, or
        \item there is $w'\neq u,v$ such that $w'$ witnesses that $\{u,v\}$ is not a module and $w$ witnesses that either $\{u,w'\}$ or $\{v,w'\}$ is not a module.
    \end{enumerate}
    Therefore $w\in M(u,v)$ can be expressed using an $\exists_1$ formula of finitary first-order logic. Moreover, we will show that
    \[ \tag{$*$}  M(u,v) = \{w: w\land u \succ u\land v\} \cup\{w: w \land v \succ u\land v\}.\]
\end{lemma}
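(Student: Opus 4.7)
My plan is to work entirely in the decomposition tree $\T = \T(\G)$, prove the tree-theoretic formula $(*)$ first, and deduce the three-case characterization as a corollary. Throughout write $m := u \land v$ for the meet in $\T$, and recall that for distinct leaves $x,y$ one has $x \E y \iff \ell(x \land y) = 1$.

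Set $S := \{w : w \land u \succ m\} \cup \{w : w \land v \succ m\}$. I first argue that $S$ is a module containing $u,v$, which gives $M(u,v) \subseteq S$. The memberships $u,v \in S$ are immediate. For a leaf $y \notin S$, both $y \land u, y \land v \preceq m$; since each is a common ancestor of $y$ with the opposite vertex in $\{u,v\}$, a short chain comparison forces $y \land u = y \land v$. Moreover, for any $x \in S$---say with $x \land u \succ m$---the equality $y \land x = y \land u$ follows by observing that $y$'s ancestor chain diverges from $u$'s already at $y \land u$, while $x$'s chain still follows $u$'s there. Hence $\ell(y \land x)$ is constant in $x \in S$, so $S$ is a module.

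For the reverse inclusion together with the three-case description, I will show that every $w \in S$ satisfies one of (1)--(3), noting that each of (1)--(3) visibly forces $w \in M(u,v)$ by the inductive closure. Fix $w \in S$, WLOG with $w \land u \succ m$. A meet calculation gives $w \land v = m$, hence $w \E u \iff \ell(w \land u) = 1$ and $w \E v \iff \ell(m) = 1$. The subcases $w \in \{u,v\}$ and $\ell(w \land u) \neq \ell(m)$ yield (1) and (2) immediately. The heart of the argument is the remaining case $\ell(w \land u) = \ell(m)$: density of $\ell$ applied to the non-leaves $m \prec w \land u$ produces $z$ with $m \prec z \prec w \land u$ and $\ell(z) \neq \ell(m)$. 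Ramifiedness expresses $z$ as the meet of a finite set of leaves, and a short semilattice argument shows that any minimal such set has size two, so some leaf $w'$ lies in a direction at $z$ distinct from $u$'s, giving $w' \land u = z$ and $w' \land v = m$. The label mismatch $\ell(z) \neq \ell(m)$ makes $w'$ witness non-modularity of $\{u,v\}$; and since $w$ and $w'$ diverge at $z$ while $w \land u \succ z$, one has $\ell(w \land w') = \ell(z) \neq \ell(w \land u)$, so $w$ witnesses non-modularity of $\{u, w'\}$, delivering case (3).

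The main obstacle is this label-match subcase: both structural properties of the decomposition tree---density of $\ell$, to get the intermediate node $z$, and ramifiedness, to realize $z$ as the divergence point of a leaf from $u$---are essential. Once (1)--(3) is in hand the $\exists_1$-definability is immediate, since each clause is a finite existential statement about edges of $\G$.
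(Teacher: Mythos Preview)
Your proposal is correct and follows essentially the same approach as the paper: both arguments establish $(*)$ by showing that the right-hand side is a module (giving $M(u,v)\subseteq S$) and that every element of $S$ satisfies one of (1)--(3) via the density and ramifiedness of the decomposition tree (giving $S\subseteq M(u,v)$). The only difference is the order in which the two inclusions are treated, which is cosmetic.
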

\begin{proof}
    One direction---that if $w$ satisfies (a), (b), or (c) then $w \in M(u,v)$---is clear. We must prove the other direction. One way to argue this is to verify by case analysis that if $w$ enters $M(u,v)$ in three steps, but not in two steps, then there must be an induced subgraph isomorphic to $\path{4}$. We will take a more structural approach via the decomposition tree.

    Now we will show that
    \[  \{w: w\land u \succ u\land v\} \cup\{w: w \land v \succ u\land v\} \subseteq M(u,v).\]
    If $w = u$ or $w = v$ then by (1) we have $w \in M(u,v)$. Otherwise, suppose that $w \land u \succ u \land v$.  Then $w \land v = u \land v$. Either the labels of $w \land u$ and $w \land v$ are different---in which case there is an edge from $w$ to $u$ and none from $w$ to $v$, or vice versa, and so $w \in M(u,v)$ by (2)---or they are the same. If they have the same label---say, without loss of generality, label $0$---then there is some node $\sigma$ with label $1$ in between them, that is, $w \land u \prec \sigma \prec w \land v$. Since $\T$ is ramified, $\sigma$ is the meet of two leaves extending it, and at least one of those leaves, say $w'$, does not extend $w \land u$. So we have $w \land w' = w' \land u = \sigma$. We also have $w' \land v = w' \land v = u \land v$.
    \[ \Tree [.{$u \wedge v$} [.{$w' \wedge u$} [.{$u \wedge w$} [.$w$ ] [.$u$ ]] [.$\vdots$ [.$w'$ ]]] [.$\vdots$ [.$v$ ] ]] \]
    Thus: There is an edge from $w'$ to $u$ and no edge from $w'$ to $v$, and there is an edge from $w$ to $w'$ and no edge from $w$ to $u$ or to $v$. Then $w \in M(u,v)$ by (3).

    Note that every $w$ on the right-hand-side of ($*$) satisfies one of (1), (2), or (3). So if we can show that
    \[   M(u,v) \subseteq \{w: w\land u \succ u\land v\} \cup\{w: w \land v \succ u\land v\} \]
    then we are done. The right-hand-side of ($*$) contains $u$ and $v$ so it suffices to show that it is a module. Suppose that $w$ is on the right-hand-side and that $w'$ is not. Suppose without loss of generality that $w \land u \prec u \land v$. We also have $w' \land u = w' \land v \succeq u \land v$. This implies that $w \land w' = w' \land u = w' \land v$. So whether or not there is an edge between $w$ and $w'$ depends on the label of $w \land w'$, and this node is independent of the choice of $w$.
\end{proof}

Given distinct vertices $u,v$ in a cograph, let $S(u,v)$ be the least strong module containing $u$ and $v$. Note that $S(u,v)$ is robust. The definition of a strong module forces the set $S(u,v)$ to not only contain $M(u,v)$, but also be closed under vertices $w$ not yet in the set that can form an \emph{overlapping} set $M(u',w)$ where $u'$ is in the set and some $v'\notin M(u',w)$. (We define two sets as \define{overlapping} if they are not disjoint and neither is contained in the other.) \cref{claim:S}(c) shows that it is enough to consider the situation where $u'=u$ and $v'=v$. In particular, the closure can be done in one step.
\begin{lemma} \label{claim:S}
    Let $\G$ be a cograph with tree $\T$, and let $u,v$ be distinct vertices in $\G$. Then $w\in S(u,v)$ if and only if either $w\in M(u,v)$ or $v\notin M(u,w)$. Therefore $w\in S(u,v)$ can be expressed using an $\exists_2$ formula of finitary first-order logic.
\end{lemma}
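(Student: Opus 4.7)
The plan is to translate everything to the decomposition tree $\T$ and reason combinatorially. Recall from the discussion of robust modules in \Cref{sec:prel} that $S(u,v)$ corresponds to the node $u\wedge v$ of $\T$: its vertices are exactly the leaves descending from $u\wedge v$, i.e., those $w$ with $u\wedge v\preceq w$. Combining this with the characterization $(*)$ of \Cref{claim:M}, namely
\[ M(u,v)=\{w:w\wedge u\succ u\wedge v\}\cup\{w:w\wedge v\succ u\wedge v\}, \]
reduces the claim to a case analysis on the relative positions of the meets $u\wedge v$, $u\wedge w$, and $v\wedge w$ in $\T$.

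For the $(\Rightarrow)$ direction, assume $w\in S(u,v)$, so $u\wedge v\preceq w$; this forces $w\wedge u,\,w\wedge v\succeq u\wedge v$. If either inequality is strict, $w\in M(u,v)$ by $(*)$ and we are done. Otherwise $w\wedge u=w\wedge v=u\wedge v$; in particular $u\wedge w=u\wedge v$, and $v\wedge w=u\wedge v=u\wedge w$, so neither $v\wedge u\succ u\wedge w$ nor $v\wedge w\succ u\wedge w$ holds. Hence $v\notin M(u,w)$ by $(*)$.

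For the $(\Leftarrow)$ direction, the case $w\in M(u,v)$ is immediate because $M(u,v)\subseteq S(u,v)$ (every strong module containing $u,v$ is in particular a module containing $u,v$, so the least strong such module contains the least module). Suppose instead that $v\notin M(u,w)$ and, for contradiction, that $w\notin S(u,v)$, i.e., $u\wedge v\npreceq w$. Among the ancestors of $u$, which are linearly ordered, the nodes $u\wedge v$ and $u\wedge w$ are therefore comparable; if $u\wedge v\preceq u\wedge w$ then $u\wedge v\preceq u\wedge w\preceq w$, contradicting $u\wedge v\npreceq w$, so $u\wedge w\preceq u\wedge v$, and equality is likewise excluded. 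Thus $u\wedge w\prec u\wedge v$, and consequently $v\wedge u=u\wedge v\succ u\wedge w$, which by $(*)$ places $v\in M(u,w)$, a contradiction.

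For the complexity assertion, \Cref{claim:M} shows that $w\in M(u,v)$ is $\exists_1$-definable, so $v\notin M(u,w)$ is $\forall_1$-definable, and the disjunction is $\exists_2$-definable using the standard equivalence between $\exists\bar x\,\phi(\bar x)\lor\forall\bar y\,\psi(\bar y)$ and $\exists\bar x\,\forall\bar y\,(\phi(\bar x)\lor\psi(\bar y))$ when the existential and universal variables are disjoint. The main subtlety in the proof is the initial identification of $S(u,v)$ with the set of leaves descending from $u\wedge v$; but this is exactly the content of the correspondence between nodes of $\T$ and robust modules of $\G$ recorded in \Cref{sec:prel}, together with the observation that $S(u,v)$ is robust.
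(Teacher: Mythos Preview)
Your proof is correct and follows essentially the same approach as the paper: both identify $S(u,v)$ with the set of leaves above $u\wedge v$ in $\T$ and then use the characterization $(*)$ from \Cref{claim:M} to do a case split on the relative positions of the meets. The only minor difference is in the $(\Leftarrow)$ direction for the case $v\notin M(u,w)$: the paper argues at the module level (the module $M(u,w)$ meets the strong module $S(u,v)$ in $u$ but does not contain $v$, hence $M(u,w)\subseteq S(u,v)$ by strongness), whereas you stay inside the tree and derive $u\wedge w\prec u\wedge v$ directly from comparability of the ancestors of $u$, which is a slightly more uniform argument.
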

\begin{proof}
    In one direction, if $w \in M(u,v) \subseteq S(u,v)$ then $w \in S(u,v)$. And if $v \notin M(u,w)$ then since $M(u,v)$ intersects $S(u,v)$ at $u$, and $S(u,v) \nsubseteq M(u,w)$, we must have $M(u,w) \subseteq S(u,v)$ as otherwise $S(u,v)$ would not be strong, and so $w \in S(u,v)$.

    Now suppose that $w \in S(u,v)$. Following the definition of the decomposition tree, $w \succeq u \land v$. If $w \land u \succ u \land v$ or $w \land v \succ u \land v$ then, by ($*$) of \cref{claim:M}, $w \in M(u,v)$.
    \[ \Tree [.{$u \wedge v$} [.{$w \wedge u$} [.$w$ ] [.$u$ ]] [.$\vdots$ [.$v$ ]]] \qquad \qquad \Tree [.{$u \wedge v$} [.$\vdots$ [.$u$ ]] [.{$w \wedge v$} [.$w$ ] [.$v$ ]] ]\]
    Otherwise $w \land u = w \land v = u \land v$.
    \[\Tree [.{$u \wedge v \wedge w$} [.$\vdots$ [.$u$ ] ] [.$\vdots$ [.$v$ ] ] [.$\vdots$ [.$w$ ]]] \]
    Then, again by ($*$) of \cref{claim:M}, $v \notin M(u,w)$.
\end{proof}

We now use the fact, from \cref{claim:S}, that the relation $w \in S(u,v)$ is $\exists_2$ definable to give a bi-interpretation between a cograph and its decomposition tree.

\begin{theorem} \label{theorem:cotreescographsBBF}
    A cograph and its decomposition tree are model-theoretically bi-interpretable. It follows that the theory of cographs and the theory of densely labeled ramified meet-trees are BBF-equivalent. (In particular, they are model-theoretically bi-interpretable).
\end{theorem}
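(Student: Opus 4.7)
The plan is to construct an explicit model-theoretic bi-interpretation between a cograph $\G$ and its decomposition tree $\T_\G$, relying on \Cref{claim:M,claim:S}. One direction is the standard interpretation of $\G$ in $\T_\G$ already used in the proof of \Cref{theorem:cographborelcomplete}: the domain is the set of nodes labeled $2$ (the leaves), the congruence is trivial, and the edge relation is defined by $u \mathrel{E} v \iff \ell(u \wedge v) = 1$. This is quantifier-free in the language of labeled meet-trees.

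For the opposite interpretation, the key observation is that the nodes of $\T_\G$ are exactly the robust modules of $\G$, i.e., the sets of the form $S(u,v)$ for vertices $u,v$ of $\G$: singletons $\{v\}=S(v,v)$ correspond to leaves and modules $S(u,v)$ with $u \neq v$ correspond to internal nodes. By \Cref{claim:S}, the relation $w \in S(u,v)$ is $\exists_2$-definable in $\G$. I will therefore take as domain the set $G^2$ quotiented by
\[(u,v) \sim (u',v') \iff \forall w\,(w \in S(u,v) \leftrightarrow w \in S(u',v')),\]
declare $[(u,v)] \preceq [(u',v')]$ to mean $u' \in S(u,v) \wedge v' \in S(u,v)$ (which captures $S(u,v) \supseteq S(u',v')$, the correct direction since predecessors in the tree correspond to larger modules), and label each class $[(u,v)]$ with $2$ if $u=v$, with $1$ if $u \neq v$ and $u \mathrel{E} v$, and with $0$ otherwise. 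All of these relations are first-order definable in $\G$, yielding a model-theoretic interpretation of $\T_\G$ in $\G$.

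To complete the bi-interpretation I need to exhibit FO-definable isomorphisms between each structure and its double interpretation. From the $\G$ side the composite sends $v$ to the leaf $[(v,v)]$ of the reconstructed tree, an obviously definable bijection. From the $\T_\G$ side the composite sends a node $\sigma$ to the class $[(u,v)]$ where $u,v$ are any two leaves of $\T_\G$ with $u \wedge v = \sigma$; such leaves exist by ramifiedness (taking $u=v=\sigma$ if $\sigma$ is itself a leaf), and the assignment is definable via an existential quantifier over leaves. The main obstacle is the bounded-complexity first-order definability of the closure operator $S(u,v)$: without the two-step characterizations in \Cref{claim:M,claim:S}, the natural definition would close off a chain of overlapping modules and could not be captured by a single finitary formula. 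Since the defining formulas do not depend on the specific cograph, the bi-interpretation is uniform across the class, yielding model-theoretic bi-interpretability of the classes and, a fortiori, the announced BBF-equivalence between the class of cographs and the class of densely labeled ramified meet-trees.
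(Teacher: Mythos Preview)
Your proposal is correct and follows essentially the same approach as the paper: interpret $\T_\G$ in $\G$ on the domain $G^2$ using the first-order definability of $w\in S(u,v)$ from \cref{claim:S}, interpret $\G$ back as the leaves of $\T_\G$, and check the two composite isomorphisms $v\mapsto[(v,v)]$ and $\sigma\mapsto[(u,v)]$ with $u\wedge v=\sigma$. Your single clause $u',v'\in S(u,v)$ for the order is in fact a mild streamlining of the paper's two-case definition; the one point the paper makes explicit that you leave implicit is that the map $\G\mapsto\T_\G$ is onto the class of densely labeled ramified meet-trees (citing~\cite{siblings}), which is what upgrades the uniform bi-interpretation to a genuine BBF-\emph{equivalence} of the two classes rather than merely a one-sided reduction.
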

\begin{proof}
Given a cograph $\G$, we define its decomposition tree interpreted in $\G$ as follows. The domain of $\T_\G$ consists of pairs $(u,v) \in G^2$ with $(u,v) \sim (u',v')$ if and only if $(u,v)=(u',v')$, or $u \neq v$, $u' \neq v'$, and the robust module $S(u,v)$ generated by $u,v$ is the same as the one $S(u',v')$ generated by $u',v'$. This can be expressed by the $\forall_3$ formula saying that for all $w$, $w \in S(u,v)$ if and only if $w \in S(u',v')$. We set $(u,v) \preceq (u',v')$ if and only if either (a) $u' = v' \in S(u,v)$, or (b) $u \neq v$, $u' \neq v'$, and $S(u,v) \supseteq S(u',v')$. This is also expressed by a $\forall_3$ formula. Finally, we put the label $2$ on all pairs $(u,v)$ with $u = v$, and otherwise we label $(u,v)$ by $0$ if there is no edge from $u$ to $v$, and $1$ if there is an edge from $u$ to $v$. This is quantifier-free.

The interpretation of $\G$ in $\T_\G$ is easier. The domain of $\G$ is the leaves of $\T_\G$, which are the nodes labeled $2$, with the equivalence relation being equality. We put an edge between $u$ and $v$ if and only if $u \wedge v$ is labeled $1$. This is all quantifier-free definable.

Finally, we must show that this is a bi-interpretation. This is easy to see: When $\G$ is interpreted in $\T_\G$ which is interpreted in $\G$, the elements of $\G$ are interpreted as leaves of $\T_\G$, which themselves are interpreted as pairs $(u,u)$ in $\G$. So the isomorphism between these two copies of $\G$ is just the map $u \mapsto (u,u)$ on $\G$ which is of course definable. When $\T_\G$ is interpreted in $\G$ which is interpreted in $\T_\G$, a leaf of $\T_\G$ is represented by a pair $(u,u)$ of $\G$, where $u$ is corresponding leaf of $\T_\G$. A non-leaf node, say $w = u \wedge v$, of $\T_\G$ is interpreted as a pair $(u,v)$ of $\G$, which is a pair $(u,v)$ of leaf nodes. This isomorphism is also definable as given $w \in \T$ we can define the set of all pairs $(u,v)$ such that $u \wedge v = w$.

For the last statement of the theorem, note that our interpretations are uniform in $\G$, and thus we obtain a BBF-reduction from cographs to labeled ramified meet-trees via the equivalence proven in~\cite{borelfunctors}. From the fact that the association between a cograph and its decomposition tree gives a bijection between cographs and densely ramified meet trees~\cite{siblings}, we get that the forward functor in this BBF-reduction is onto (cf. \cref{def:cbf}: in our case $\hat{\mathbb D}$ is the whole class). Thus, we also obtain an inverse reduction, proving the equivalence. Model theoretic bi-interpretability of the classes now follows from the fact that all formulas in the interpretations are finitary.
\end{proof}

The bi-interpretation just shown will be used to prove the following:

\begin{theorem} \label{theorem:cotreesnotuniversal}
The class of cographs is not on top for infinitary bi-interpretability. 
\end{theorem}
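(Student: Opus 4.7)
The plan is to exhibit a group realized as the automorphism group of some countable structure but not of any cograph; since, as noted just before the theorem, infinitary bi-interpretability preserves automorphism groups, this is enough to prevent cographs from being on top. The natural candidate is $\mathbb{Z}_3$: by Frucht's theorem (or its extension due to de Groot to countable graphs) $\mathbb{Z}_3$ is the automorphism group of some countable graph, so the task reduces to showing that no cograph has automorphism group isomorphic to $\mathbb{Z}_3$.

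Assume for contradiction that $\mathrm{Aut}(\G) \cong \mathbb{Z}_3$ for some cograph $\G$. By \Cref{theorem:cotreescographsBBF} this yields an order-three automorphism $\tilde f$ of the decomposition tree $\T_\G$; pick leaves $a, b, c$ with $\tilde f(a)=b$, $\tilde f(b)=c$, $\tilde f(c)=a$. The first key step is to show $a \wedge b = b \wedge c = c \wedge a$, and to call this common node $v$. Indeed, in any meet-tree at least two of the three pairwise meets of three distinct points must coincide (a short case analysis on branching), and since $\tilde f$ injectively permutes these three meets cyclically, any single coincidence forces all three to be equal. Consequently $v$ is a non-leaf node fixed by $\tilde f$.

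To produce an order-two automorphism, partition the cone $\{x \succ v\}$ into its branches at $v$ by declaring $x \sim_v y$ iff $x \wedge y \succ v$; this is an equivalence relation via the standard meet-tree identity. Let $A, B, C$ be the branches of $v$ containing $a, b, c$ respectively; since $\tilde f$ fixes $v$ and cyclically permutes $a, b, c$, it cyclically permutes the branches $A, B, C$. Define
\[
g(x) = \begin{cases} \tilde f(x), & x \in A, \\ \tilde f^{-1}(x), & x \in B, \\ x, & \text{otherwise}. \end{cases}
\]
A case-by-case verification shows $g$ is a tree automorphism; it is clearly an involution with $g(a) = b \neq a$, so has order exactly two. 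Transporting this back along the bi-interpretation produces an order-two element of $\mathrm{Aut}(\G)$, contradicting $\mathrm{Aut}(\G) \cong \mathbb{Z}_3$.

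The main obstacle is realizing this construction inside an arbitrary ramified meet-tree: the node $v$ need not have immediate successors, so one cannot literally ``swap the two subtrees rooted at the children of $v$ above $a$ and $b$.'' Working with branches sidesteps this, but the verification that $g$ is order- and meet-preserving then requires several cases---especially pairs in which one element lies in $A$ and the other in a branch of $v$ distinct from $A$ or $B$---each of which reduces to the fact that two elements of distinct branches of $v$ meet precisely at $v$, a relation manifestly preserved by $g$.
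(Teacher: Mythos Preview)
Your proposal is correct and follows essentially the same approach as the paper: exhibit $\mathbb{Z}_3$ as a group not realized by any cograph, use the bi-interpretation with the decomposition tree, argue that the three pairwise meets of $a,b,c$ coincide at a fixed node $v$, and then build an involution by swapping two of the three branches at $v$ via $\tilde f$ and $\tilde f^{-1}$. The only cosmetic difference is that the paper defines the involution directly on the vertex set $G$ (using the tree only to specify the sets $A,B,C,D$), whereas you define it on the full tree and transport back; your explicit remark that $v$ need not have immediate successors, and that working with branches rather than subtrees rooted at children handles this, is a point the paper leaves implicit.
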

\begin{proof}
    We will show that $\mathbb{Z}_3$ is not the automorphism group of any cograph.\footnote{We note that $\mathbb{Z}$ is the automorphism group of a cograph, by extending the example in~\cite[Figure 3]{siblings}.} On the other hand, it is the automorphism group of some structure, and so this will show that cographs are not on top for infinitary bi-interpretability.
    
    Suppose that $\G$ is a graph and let $a,b,c$ be three of its vertices. Suppose that $f$ is an automorphism of order $3$ such that $f(a)=b$, $f(b)=c$, and $f(c)=a$. We reason within the decomposition tree $\T$ of $\G$, recalling that $\G$ corresponds to the leaves of $\T$. (Since $\G$ and $\T$ are infinitary bi-interpretable (\Cref{theorem:cotreescographsBBF}), they have the same automorphism group. An automorphism of $\G$ induces an automorphism of $\T$ by mapping robust modules of $\G$ to other robust modules.)
    
     Hence $f$ acts on $\T$ as follows:
\[f(a \land b)=f(a)\land f(b)=b \land c, \ f(b \land c)=c \land a, \ f(a \land c)=a \land b\]
and
\[ f(a\land b\land c)=a\land b\land c.\]
Reasoning about trees, at least two of $a \wedge b$, $b \wedge c$, and $c \wedge a$ are equal to $a \wedge b \wedge c$. Since this is fixed by $f$, and $f$ maps $a \wedge b \mapsto b \wedge c \mapsto c \wedge a \mapsto a \wedge b$, we must have that $a \wedge b = b \wedge c = c \wedge a = a \wedge b \wedge c$.
\[\Tree [. [.{$a \wedge b \wedge c$} [.$\vdots$ [.$a$ ] ] [.$\vdots$ [.$b$ ] ] [.$\vdots$ [.$c$ ]]] [.$\cdots$ ] ]\]

Now consider the follows sets:
\begin{align*}
    A &= \{ u \in G : u \succeq a \wedge b \wedge c \text{ and } u \wedge a \succ a \wedge b \wedge c\} \\
    B &= \{ u \in G : u \succeq a \wedge b \wedge c \text{ and } u \wedge b \succ a \wedge b \wedge c\} \\
    C &= \{ u \in G : u \succeq a \wedge b \wedge c \text{ and } u \wedge c \succ a \wedge b \wedge c\} \\
    D &= G - A - B - C
\end{align*}
The $f$ maps, setwise, $A$ to $B$, $B$ to $C$, and $C$ to $A$, and fixed $D$ setwise. Then define
\[ g(u)=\begin{cases}
        f(u) & u \in A\\
        f^{-1}(u)& u \in B\\
        u & u \in C \cup D
\end{cases}.\]
This is an automorphism of $\G$ which is its own inverse. Thus if $\G$ has an automorphism of order three, it must have an automorphism of order two, and so cannot have automorphism group $\mathbb{Z}_3$.
\end{proof}

The following proof construction relies on ideas from~\cite{booth1979problems} proving that $\finfree{\path{4}}$ is not $GI$-complete, i.e., not every graph isomorphism problem polynomial-time reduces to it.

\begin{theorem}
\label{theorem:universalcondition}
For a finite graph $\G$, the following are equivalent:
    \begin{enumerate}\tightlist
        \item[(i)] $\free{\G}$ is on top for effective and infinitary bi-interpretability;
        \item[(ii)] $\free{\G} \not\subseteq \free{\path{4}}$.
    \end{enumerate}
\end{theorem}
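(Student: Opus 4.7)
\textbf{Direction (i) $\Rightarrow$ (ii)}: I would argue contrapositively. Suppose $\free{\G} \subseteq \free{\path{4}}$. Since every computable functor is Borel, CBF-reducibility implies BBF-reducibility, so being on top for effective bi-interpretability already entails being on top for infinitary bi-interpretability. If $\free{\G}$ had either property, then composing with the inclusion $\free{\G} \subseteq \free{\path{4}}$ (and using that BBF-reductions compose) would show that $\free{\path{4}}$ is on top for infinitary bi-interpretability, contradicting \Cref{theorem:cotreesnotuniversal}. Hence neither version of (i) can hold.

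\textbf{Direction (ii) $\Rightarrow$ (i)}: The plan is to exhibit, uniformly in a countable graph $\H$, a $\G$-free graph $\H^*$ that is effectively bi-interpretable with $\H$. Since the class of all countable graphs is on top for effective bi-interpretability (via the classical first-order interpretation of arbitrary structures into graphs), and CBF-reducibility composes, this is enough to place $\free{\G}$ on top for effective---and hence also infinitary---bi-interpretability. The construction adapts the gadget-replacement strategy of Booth~\cite{booth1979problems} to the infinite setting: each edge $\{u,v\}$ of $\H$ is replaced by a carefully chosen $\G$-avoiding gadget joining $u$ and $v$, and each original vertex of $\H$ is decorated with a distinguishing local configuration (a pendant or a small rigid subgraph) so that $V(\H)$ becomes a quantifier-free definable subset of $V(\H^*)$. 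Since $\G$ is not an induced subgraph of $\path{4}$, the graph-theoretic fact recalled in the proof of \Cref{theorem:graphsembeddable} guarantees that $\G$ or $\overline{\G}$ contains some induced cycle $\cycle{m}$ with $m \geq 3$; the gadget is then chosen dually (sufficiently long paths or cycles when $\G$ contains $\cycle{m}$, and their complements when $\overline{\G}$ does).

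The inverse interpretation---recovering $\H$ from $\H^*$---is then straightforward: $V(\H)$ is the set of vertices carrying the distinguishing decoration, and the edge $\{u,v\}$ is present in $\H$ exactly when the appropriate gadget pattern appears between $u$ and $v$ in $\H^*$. Both directions are given by existential schemes over computable decorations, hence are uniformly computable in their inputs, and the composition conditions of \Cref{def:cbf} can be checked by tracking what happens to a gadget under first encoding and then decoding.

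\textbf{Main obstacle}: The genuinely combinatorial step is verifying that $\H^*$ is $\G$-free. This will require a case analysis based on whether $\G$ or $\overline{\G}$ contains an induced cycle (and of what length), and then showing that any candidate $|V(\G)|$-element induced subgraph of $\H^*$ must either straddle multiple gadgets---in which case forced distances or forced non-edges preclude a copy of $\G$---or lie locally inside a region whose structure has been designed precisely to avoid $\G$. Calibrating the gadget parameters so that the forward and inverse interpretations are simultaneously faithful, effective, and uniform, while still guaranteeing $\G$-freeness for every $\H$, is the principal technical bottleneck.
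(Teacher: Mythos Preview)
Your outline matches the paper's approach closely: both directions use the same ingredients (the contrapositive via \Cref{theorem:cotreesnotuniversal} for (i)$\Rightarrow$(ii); the cycle-in-$\G$-or-$\overline{\G}$ fact plus a Booth-style gadget encoding for (ii)$\Rightarrow$(i), with the complement case handled by the trivial CBF-equivalence $\free{\G}\leftrightarrow\free{\overline{\G}}$).

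One point deserves care. You write that edges of $\H$ are replaced by gadgets and that the inverse interpretation recovers an edge ``exactly when the appropriate gadget pattern appears''. If gadgets are attached only along edges, then detecting a \emph{non}-edge in $\H$ from $\H^*$ is a $\Pi_1$ condition (absence of a pattern), so the inverse operator $\Psi$ would be merely co-c.e., not computable, and you would not get a \emph{computable} functor back. The paper avoids this by inserting a path of length $|G|$ between \emph{every} pair of vertices and distinguishing edges from non-edges by the size of the cycles hung off the internal path vertices ($\cycle{n+1}$ versus $\cycle{n+2}$); since each such vertex carries exactly one cycle of bounded size, the edge relation of $\H$ becomes decidable from any copy of $\H^*$. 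Your phrase ``existential schemes \ldots\ hence uniformly computable'' elides exactly this issue---existential definability gives c.e., not computable. Once you build the gadget so that both outcomes are positively witnessed, the rest of your plan (verifying $\G$-freeness via the girth bound, and checking the pseudo-inverse conditions of \Cref{def:cbf}) goes through as in the paper.
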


\begin{proof}
To show that (i) implies (ii) notice that,  by \Cref{theorem:cotreesnotuniversal}, $\free{\path{4}}$ is not on top for infinitary bi-interpretability. Hence, since the property of being on top for effective bi-interpretability is closed under subset we have that $\free{\G}$ is not on top for infinitary bi-interpretability. Since effective bi-interpretability implies infinitary intepretability we also have that $\free{\G}$ is not on top for effective bi-interpretability as well.

For the other direction, suppose that $\G$ is not an induced subgraph of $\path{4}$. To establish that $\free{\G}$ is on top for effective bi-interpretability---and thus also for infinitary bi-interpretability---it suffices to consider the case where $\G$ contains a cycle as an induced subgraph. This is 
justified by the following two facts:
\begin{enumerate}[label={\alph*)}]
    \item a graph $\G$ is an induced subgraph of $\path{4}$ if and only if $\G$ and $\bar{\G}$ do not have a cycle as an induced subgraph. This fact is folklore mentioned in~\cite{booth1979problems} Section 4.7. The left-to-right direction is immediate combining the facts that $\overline{\path{4}}=\path{4}$ and that any induced subgraph of $\path{4}$ cannot contain a cycle. The opposite direction is immediate for graphs of more than $6$ vertices (this follows from the fact that the $R(3,3)=6)$), and it can be proved for vertices of $5$ or less vertices exhausting all the possible cases.
    \label{item:p4-cycle}
    \item The classes $\free{\G}$ and $\free{\overline{\G}}$ are trivially CBF-reducible to each other.
\end{enumerate}

We will show that the class of graphs is CBF-reducible to $\free{\G}$. We will define computable functors $F=(\Phi,\Phi_*): Graphs \to \free{\G}$ and $G=(\Psi,\Psi_*):F(Graphs)\to Graphs$ satisfying the conditions in \cref{def:cbf}. Given a graph $\H$, let $n=\min\{k : (\forall n\geq k)(\cycle{k}\text{ is not an induced subgraph of } \H)\}$.

\noindent
\underline{\textit{Definition of $\Phi$}}. Given a graph $\H$, let $\Phi(\H)$ be the graph obtained as follows, for an example see~\cref{fig:defphi}.
\begin{enumerate}
  \item For every $v \in H$, add to $\Phi(\H)$ a point $\Phi(\H)(v)$ and make it part of a copy of $\cycle{n+1}$.
  \item Then, for every $v,w\in H$ so that $v\E w$, connect them by a copy of $\path{|G|}$ and make each vertex in this path a member of disjoint copies of $\cycle{n+1}$. If $v\notE w$, then still connect them by a copy of $\path{|G|}$ but make each vertex in this path a member of a disjoint copy of $\cycle{n+2}$.
\end{enumerate}

\begin{figure}[H]
    \centering\includegraphics{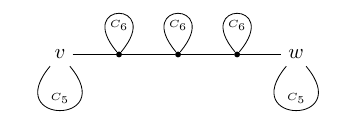}
    \caption{\label{fig:defphi} The operator $\Phi$ on elements $v,w$ connected by an edge to reduce $\free{\path{5}}$. The complement of $\path{5}$ contains a cycle of length 4, thus we use $\cycle{5}$ and $\cycle{6}$.}
\end{figure}
We need to show that given a graph $\H$, 
$\Phi(\H)$ is $\free{\G}$ and to do so, it suffices to show that $\cycle{n-1}$ is not an induced subgraph of $\Phi(\H)$. To check this notice that clearly a copy of $\cycle{n-1}$ cannot be in any of the cycles that have been added to $\Phi(\H)$. Even more, no copy of $\cycle{n-1}$ can share a vertex with such cycles. It remains to exclude the case in which the copy of $\cycle{n-1}$ consists of vertices coming from the copies of $\path{|G|}$. By definition, $ |G| \geq n-1$ and since all paths between vertices of $H$ are of length $|G|$ it follows that any cycle is of length greater than $n-1$.

Furthermore, the function $\Phi$ we defined is computable as it is defined on finite portions of $\H$ and we also have that $\H \cong \H'$ if and only if $\Phi(\H) \cong \Phi(\H')$.

\noindent\underline{\textit{Definition of $\Phi_*$}}.
Let $f$ be an isomorphism between two graphs $\H$ and $\H'$. We define the isomorphism $\Phi_*(f)$ from $\Phi(\H)$ to $\Phi(\H')$ as follows.
\begin{itemize}
\item if $v \in H$, $\Phi_*(f)$ maps $\Phi(\H)(v)$ to $\Phi(\H')(f(v))$. Furthermore, $\Phi_*(f)$ maps the copy of $\cycle{n}$ attached to $\Phi(\H)(v)$ to the one attached to $\Phi(\H')(f(v))$ (the existence of $\cycle{n}$ is guaranteed by the definition of $\Phi$);
    \item for every $v,w \in H$, consider $\Phi(\H)(v)$ and $\Phi(\H)(w)$. By definition of $\Phi$, in $\Phi(\H)$ there is a copy of $\path{|G|+2n+2}$ with endpoints $\Phi(\H)(v)$ and $\Phi(\H)(w)$ and in which every vertex that is not $\Phi(\H)(v)$ or $\Phi(\H)(w)$ has attached a copy of $\cycle{n+1}$ or $\cycle{n+2}$, depending on whether $(v,w)$ is an edge in $\H$ or not. The isomorphism $\Phi_*(f)$ maps this induced subgraph to the one having endpoints $\Phi(\H)(f(v))$ and $\Phi(\H)(f(w))$ (which exists, by definition of $\Phi(\H')$.
\end{itemize}
Notice that $\Phi_*(f)$ is an isomorphism between $\Phi(\H)$ and $\Phi(\H')$ which is computable relative to  $\H$, $\Phi$ and $\H'$.\\
\underline{\textit{Definition of $\Psi$}}. Given $\H \in \C$ let $\Psi(\H)$ be the graph having as vertex set the set of vertices $v \in H$ with a copy of $\cycle{n}$ attached to them and with $deg(v)>2$. The condition $deg(v)>2$ just ensures that we choose the correct vertex in a copy of $\cycle{n}$: such a condition can be checked computably, as for every copy of $\cycle{n}$ in $\H$ only one vertex satisfies it. Then for every $v,w$ with a copy of $\cycle{n}$ attached to them, let $(v,w)$ be an edge in $\H$ if and only if all vertices in the copy of $\path{|G|+2n+2}$ with $v$ and $w$ as endpoints have a copy of $\cycle{n+1}$ attached to them. The existence of such cycles can be checked computably as every vertex in such a path has attached either a copy of $\cycle{n+1}$ or $\cycle{n+2}$. Furthermore, $\H \cong \H'$ if and only if $\Psi(\H) \cong \Psi(\H')$\\
\underline{\textit{Definition of $\Psi_*$}}. Let $f$ be an isomorphism between two graphs $\H, \H' \in \C\subseteq \free{\path{4}}$. We define the isomorphism $\Psi_*(f)$ from $\Psi(\H)$ to $\Psi(\H')$ as follows:  for every $v \in H$ with a copy of $\cycle{n}$ attached to it, $\Psi_*(f)$ maps $\Psi(\H)(v)$ to $\Phi(\H')(f(v))$. The fact that it is an isomorphism computable relative to $\H$, $\Psi$ and $\H'$ follows from the definition of $\Psi$.

It remains to show  that $F=(\Phi,\Phi_*)$ and $G=(\Psi,\Psi_*)$ are effective pseudo-inverses. We first  have to construct Turing operators $\Lambda$ and $
\Gamma$ witnessing that $GF$ and $FG$ are naturally isomorphic to the identity functor. To construct $\Lambda$, note that for a vertex $v$ in a graph $\H$, $\Phi$ builds a designated vertex $\Phi(\H)(v)$ and similarly $\Psi$, identifies the vertex set of the original graph inside $\Phi(\H)$ and builds a vertex, call it $x_v$. We let $\Lambda(\H): v\mapsto x_v$. By construction $\Lambda$ is computable and one can check that for any two graphs $\H_1, \H_2$ and isomorphism $f: \H_1\cong \H_2$, the associated diagram commutes, showing that $GF$ is effectively naturally isomorphic to the identity functor. 
The operator $\Gamma$ can be constructed in similar fashion to $\Lambda$ and checking that $\Gamma$ induces an effective natural isomorphism of $FG$ and the identity is routine.
At last, it remains to check that for every graph $\H$, $F(\Lambda^\H)=\Gamma^{F(\H)}$ and for every graph $\H\in\C$, $G(\Gamma^\H)=\Lambda^{G(\H)}$. Both facts are again guaranteed by the uniformity of the construction. 
\end{proof}

The following theorem is the analogous of \Cref{theorem:universalcondition} for Borel reducibility. Notice that in \Cref{theorem:universalcondition}
we did not have to consider the case $\free{\G}\subseteq \free{\overline{\path{4}}}$ as $\path{4}=\overline{\path{4}}$ (this fact was used in part a) of the proof). 
\begin{theorem}
\label{theorem:characterizationborelcomplete}
For a finite graph $\G$, The following are equivalent:
    \begin{enumerate}\tightlist
        \item[(i)] $\free{\G}$ is on top for Borel reducibility;
        \item[(ii)] $\free{\G}\not\subseteq\free{\path{3}}$ and $\free{\G}\not\subseteq \free{\overline{\path{3}}}$.
    \end{enumerate}
\end{theorem}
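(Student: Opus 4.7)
The plan is to prove each implication separately, reducing each to previously established results in the paper.

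For the direction (i) $\Rightarrow$ (ii), I would argue the contrapositive. Suppose (ii) fails: then by \Cref{lemma:forbiddenGH} either $\free{\G}\subseteq\free{\path{3}}$ or $\free{\G}\subseteq\free{\overline{\path{3}}}$. In the first case, every member of $\free{\G}$ is a disjoint union of cliques, hence definitionally equivalent to an equivalence structure, as observed in the paragraph preceding \Cref{proposition:P3free}. It is a classical fact of invariant descriptive set theory (see \cite{FS, gao2008invariant}) that the class of countable equivalence structures is classifiable by cardinal invariants and in particular is not on top for Borel reducibility; the same therefore holds for any subclass, and so $\free{\G}$ is not on top. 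The case $\free{\G}\subseteq\free{\overline{\path{3}}}$ reduces to the previous one via the Borel, isomorphism-preserving map $\H\mapsto\overline{\H}$, which sends $\free{\overline{\path{3}}}$ bijectively onto $\free{\path{3}}$.

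For (ii) $\Rightarrow$ (i), I would split on whether $\G$ is an induced subgraph of $\path{4}$. If $\G$ is not an induced subgraph of $\path{4}$, then \Cref{theorem:universalcondition} already gives that $\free{\G}$ is on top for effective bi-interpretability, which in particular implies being on top for Borel reducibility. Otherwise $\G$ embeds into $\path{4}$ as an induced subgraph; a direct enumeration shows that up to isomorphism the induced subgraphs of $\path{4}$ are precisely seven graphs: the empty graph, a single vertex, two isolated vertices, a single edge, $\path{3}$, $\overline{\path{3}}$, and $\path{4}$ itself. Each of these other than $\path{4}$ is an induced subgraph of $\path{3}$ or of $\overline{\path{3}}$, so by \Cref{lemma:forbiddenGH} condition (ii) forces $\G=\path{4}$. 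The conclusion is then provided by \Cref{theorem:cographborelcomplete}.

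The main obstacle is the appeal to the classical fact that the isomorphism relation on countable equivalence structures is not Borel complete; beyond citing this, the proof is a straightforward case analysis leveraging \Cref{theorem:universalcondition} and \Cref{theorem:cographborelcomplete} together with the enumeration of induced subgraphs of $\path{4}$.
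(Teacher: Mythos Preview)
Your proof is correct and follows essentially the same approach as the paper: contrapositive via the non-Borel-completeness of equivalence structures (and complementation for $\free{\overline{\path{3}}}$) for (i)$\Rightarrow$(ii), and the split into the cases $\G\nsubseteq\path{4}$ (invoking \Cref{theorem:universalcondition}) versus $\G\subseteq\path{4}$ (enumerating induced subgraphs of $\path{4}$ and invoking \Cref{theorem:cographborelcomplete}) for (ii)$\Rightarrow$(i). The only cosmetic difference is that the paper pins down the complexity of isomorphism on equivalence structures as $\Pi_4^0$ rather than appealing to classifiability by invariants, and your citation of \Cref{lemma:forbiddenGH} in the first direction is redundant since the negation of (ii) is already phrased in terms of $\free{\G}$.
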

\begin{proof}
To prove (i) implies (ii) we prove the contrapositive. To do so it suffices to combine the following two facts: (a)
neither $\free{\path{3}}$ nor $\free{\overline{\path{3}}}$ are not on top for Borel reducibility, and (b) the property of not being on top for Borel reducibility is closed under subset. Property (b) just follows by definition of Borel reducibility. Property (a) follows from the following two facts: 
\begin{itemize}
\item a graph in $\free{\path{3}}$ is just an equivalence structure, and it is well-known that deciding whether two equivalence structures are isomorphic is Borel (in particular it is $\Pi_4^0$). Indeed, to determine whether two equivalence structures are isomorphic, one has to compare the numbers of equivalence classes of each size (see e.g.~\cite[page 92]{within});
\item $\free{\overline{\path{3}}}$ Borel reduces to $\free{\path{3}}$ via the (Borel) function that, given in input a graph outputs its complement.

\end{itemize}

For the opposite direction, by \Cref{theorem:universalcondition}, we know that if $\free{\G} \not\subseteq \free{\path{4}}$, then $\free{\G}$  is on top for effective bi-interpretability and hence on top for Borel reducibility. \Cref{theorem:cographborelcomplete} tells us that $\free{\path{4}}$ is Borel complete. Notice that every graph $\G$ such that $\G$ is a proper induced subgraph of $\path{4}$ is either an induced subgraph of $\path{3}$ or $\overline{\path{3}}$. The fact that the notion being not Borel complete is closed under subset combined with what we just proved, namely that both $\free{\path{3}}$ and $\free{\overline{\path{3}}}$ are not Borel complete, allows us to conclude the proof.
\end{proof}

As a corollary of Theorems~\ref{theorem:characterizationborelcomplete} and~\ref{theorem:universalcondition} and Lemma~\ref{lemma:forbiddenGH}, we get that $\free{\path{4}}$ is the only $\free\G$ class that is on top for Borel reducibility but not for effective bi-interpretability. In particular, we get the following as announced in the introduction.
\begin{theorem} \label{theorem:p4borelnotbiinterpret}
    $\free{\path{4}}$ is on top for Borel reducibility but not for effective bi-interpretability.
\end{theorem}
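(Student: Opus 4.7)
The plan is to observe that this theorem is an immediate assembly of two results already proved in the paper, together with one elementary implication between the reducibility notions. The first half—that $\free{\path{4}}$ is on top for Borel reducibility—is exactly the content of \Cref{theorem:cographborelcomplete}, whose proof reduces trees to infinite simple decomposition trees and then reads off the associated cograph by a Borel procedure. So this half requires no further work: I would just cite it.

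For the second half, I would combine \Cref{theorem:cotreesnotuniversal} with the observation that being on top for effective bi-interpretability formally implies being on top for infinitary bi-interpretability. The latter follows because every Turing (i.e., computable) operator is in particular a Borel operator, so every CBF-reduction is automatically a BBF-reduction; hence the class of Borel classes CBF-reducing into $\free{\path{4}}$ is contained in the class of those BBF-reducing into it. If $\free{\path{4}}$ were on top for effective bi-interpretability, it would therefore be on top for infinitary bi-interpretability, contradicting \Cref{theorem:cotreesnotuniversal} (which used the fact that $\mathbb{Z}_3$ is not realized as the automorphism group of any cograph). The contrapositive yields the second half of the statement.

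An alternative, slightly heavier route would pass through the two dichotomies. Taking $\G = \path{4}$, \Cref{lemma:forbiddenGH} gives $\free{\path{4}} \not\subseteq \free{\path{3}}$ and $\free{\path{4}} \not\subseteq \free{\overline{\path{3}}}$ (since $\path{4}$ is not an induced subgraph of either $\path{3}$ or $\overline{\path{3}}$), so \Cref{theorem:characterizationborelcomplete} delivers Borel completeness. On the other hand $\free{\path{4}} \subseteq \free{\path{4}}$ trivially, so by the (i) $\Leftrightarrow$ (ii) equivalence in \Cref{theorem:universalcondition}, $\free{\path{4}}$ is not on top for effective bi-interpretability. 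I would probably present the direct route as the main argument and mention the dichotomies as a cleaner packaging.

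There is no genuine obstacle: the difficulty is concentrated in \Cref{theorem:cographborelcomplete} and \Cref{theorem:cotreesnotuniversal}. The only point worth a line of explicit justification is the monotonicity of ``on top'' along the chain of reducibility notions CBF-reducibility $\Rightarrow$ BBF-reducibility, which is an immediate consequence of the inclusion of computable operators into Borel operators and hence transfers to the corresponding notions of universality.
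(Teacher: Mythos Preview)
Your proposal is correct. Both routes you sketch are valid and close in spirit; the paper actually takes what you call the ``alternative, slightly heavier route'' and derives the statement as a corollary of \Cref{theorem:characterizationborelcomplete}, \Cref{theorem:universalcondition}, and \Cref{lemma:forbiddenGH}. Your primary route---citing \Cref{theorem:cographborelcomplete} and \Cref{theorem:cotreesnotuniversal} directly, together with the observation that on top for effective bi-interpretability implies on top for infinitary bi-interpretability---is a bit more economical, since it avoids invoking the full dichotomies (whose proofs themselves rest on \Cref{theorem:cographborelcomplete} and \Cref{theorem:cotreesnotuniversal} anyway). The paper's packaging has the advantage of making explicit that $\free{\path{4}}$ is the \emph{unique} class $\free{\G}$ with this combination of properties, which is the point being emphasized in the surrounding text. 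Either presentation is fine; there is no substantive difference in content.
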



  
  
\printbibliography
\end{document}